\newtheorem{theorem}{Theorem}[section]
\newtheorem{lemma}{Lemma}[section]
\newtheorem{proposition}{Proposition}[section]
\newtheorem{definition}{Definition}[section]
\newtheorem{remark}{Remark}[section]
\newtheorem{example}{Example}[section]
\def\R{\mathbb{R}}
\def\bx{{\bf x}}
\def\by{{\bf y}}
\def\bz{{\bf z}}
\def\bu{{\bf u}}
\def\bd{{\bf d}}
\def\bv{{\bf v}}
\def\be{{\boldsymbol\eta }}
\def\bt{{\boldsymbol\theta }}
\author{
  Jun Sun, Lingchen Kong, and Shenglong Zhou\\
  Department of Applied Mathematics, Beijing Jiaotong University, CN\\  
  Department of EEE, Imperial College London, UK \\
  
}
\title{\vspace{-1.25cm}
Gradient Projection Newton Algorithm for Sparse Collaborative Learning Using Synthetic and Real Datasets of Applications
\vspace{-0.25cm}}
\date{}
\begin{document}

\maketitle
 
\vspace{-1.3cm}

\begin{abstract}  
\noindent \textbf{Abstract:} Exploring the relationship among  multiple sets of data from one same group enables practitioners to make better decisions in medical science and engineering.   In this paper, we propose a sparse collaborative learning (SCL) model, an optimization with double-sparsity constraints, to process the problem with two sets of data and a shared response variable. It is capable of dealing with the classification problems or the regression problems dependent on the discreteness of the response variable as well as exploring the relationship between two datasets simultaneously. To solve SCL, we first present some necessary and sufficient optimality conditions and then design a gradient projection Newton algorithm which has proven to converge to a unique locally optimal solution  globally with at least a quadratic convergence rate. Finally, the reported numerical experiments illustrate the efficiency of the proposed method.
\vspace{0.3cm}
 
\noindent{\bf \textbf{Keywords}:} Sparse collaborative learning, double-sparsity, stationary point, gradient projection Newton, convergence analysis, numerical experiment
\end{abstract}

\numberwithin{equation}{section}
\section{Introduction}\label{sec:intr}

There are many scenarios where datasets from the same group can be collected  {from various sources. Therefore, they differ  but interact} \cite{Lenz,VISS,Wangs,Zhangh}. For example, a researcher studying cancer outcomes may collect gene expression data and copy number data from a group of patients. The traditional approaches to do predictions are either merging two datasets or using two datasets separately. Both ways ignore the fact that they are from different sources with different meanings (e.g., gene expression and copy number). As stated in  \cite{Thom}, exploring the relationship between sources allows for extracting informative biomarkers and improving clinical outcome predictions. Motivated by such practical applications,  {in this paper,  we propose the following sparse collaborative learning (SCL) problem}:
\begin{eqnarray}\label{005}
	\begin{aligned}
		\min_{\bt_{1},\bt_{2}}~& \frac{1}{n} \left[ a \cdot \ell(\bt_1;X,\by) + b  \cdot \ell(\bt_2;Z,\by)+\frac{c}{2}\|X\bt_{1}-Z\bt_{2}\|^{2}\right] =:f(\bt_{1},\bt_{2}) \\
		{\rm s.t.}&\quad \|\bt_{1}\|_{0}\leq s_{1},~ \|\bt_{2}\|_{0}\leq s_{2},\\
	\end{aligned}
\end{eqnarray}
 {where $\ell(\cdot)$ is a general loss function,} $X\in \mathbb{R}^{n\times p_{1}}, Z\in  \mathbb{R}^{n\times p_{2}}$ are two  datasets from two different sources and $\by\in  \mathbb{R}^{n}$ is the shared response, $n$ is the sample/subject size, and $p_{1}, p_{2}$ represent the feature/variable sizes of two datasets. Here, $\|\bt\|_0$ is the zero norm of $\bt$, counting the number of its nonzero elements, $s_{1}\ll p_{1},~s_{2}\ll p_{2}$ are two integers representing the prior information on the upper bounds of the signal sparsity, 
$a, b$ and $c$ are positive parameters, and $\|\cdot\|$ represents the Euclidean norm.  SCL models have been applied into many real-world applications, such as face recognition by using a mixture of synthetic and real images with dynamic weight \cite{Feng}, medical diagnosis including schizophrenia, Alzheimer’s disease, or various neurocognitive phenotypes by using genetic and imaging data \cite{GRO,Zill,Hu}.

Two typical examples of $\ell$ will be investigated in this paper. When $\ell$ is the linear regression loss,
$$ \ell_{lin}(\bt;X,\by):= \frac{1}{2}\sum_{i=1}^{n} ( y_{i}- \langle\bx_{i}, \bt\rangle)^{2}, $$
SCL is called sparse collaborative regression (SCoRe \cite{GRO}) usually working for the continuous response  $\by$. Here, $\langle \bx, \bz\rangle$ is the inner product of two vectors $\bx$ and $\bz$ and $\bx_{i}$ is a column vector corresponding the $i$-th row of $X$. SCoRe  is a combination of linear regression and canonical correlation analysis (CCA).  The former makes predictions via employing two different types of datasets and the latter explores the relationship between them. Examples of employing $\ell_{lin}$ include CoRe  \cite{Brefeld}, multi-task CoRe \cite{Zill} and the models studied in \cite{Feng,Hu}. 

We note that the aforementioned models based on $\ell_{lin}$ aimed to process the continuous response $\by$. However, various real-world applications involve discrete responses, in particular for those in classification problems including the severity of the disease, whether or not to die and to name a few. Under such circumstances, linear regression-based models are unlikely to provide accurate predictions and hence it is necessary to consider the logistic regression loss defined by,
$$ \ell_{log}(\bt;X,\by):= \sum_{i=1}^{n} \Big(\log \left(1+\exp \langle\bx_{i}, \bt\rangle\right)-y_{i}\langle \bx_{i}, \bt\rangle \Big). $$
SCL with such a loss is called the sparse logistic collaborative regression (SLCoRe), which can be used to deal with datasets with discrete response $\by$. SLCoRe is a combination of logistic regression and CCA, aiming at classifying the samples in each of the two datasets while exploring the relationship between them. It is well-known that discrete responses are frequently involved in classification problems, while most of the existing classification methods including support vector machines \cite{Liux,Zhangx} and logistic regression  \cite{Liu,Plan,Wang,wang2021extended} only target one dataset. Very little work makes predictions for multiple sets of data and explores the relationship among them at the same time. 

However, to accurately characterize the sparsity, it is suggested to impose the sparsity constraints directly instead of using the approximations/regularizations. For example, Beck and Eldar \cite{Beck} thoroughly studied a general sparsity-constrained optimization model and developed the famous iterative hard thresholding algorithm, in the meanwhile, Bahmani et al. \cite{Bahm} and
Plan et al. \cite{Plan} investigated the logistic regression model with sparsity constraints. After which there is a vast body of work on developing optimization algorithms and understanding the properties of various sparse estimators for the sparsity constrained optimization \cite{Wang, Panl, Pan,   ZXQ21, zhou2022}.  We emphasize that all those work aimed at addressing applications with single datasets rather than multiple datasets. 

It this paper, we study two typical examples of SCL: SLCoRe with $\ell=\ell_{log}$ and SCoRe with $\ell=\ell_{lin}$. All  results to be established are based on these two models. The main contributions of the paper are summarized as follows:
\begin{itemize}
	\item[I)] We propose a unified framework, SCL, for the problems with discrete or continuous response variables and two different sets of data. It can classify or predict the data in each dataset, and explore the relationship between the two datasets. New model \eqref{005} exploits the sparsity constraints directly, which enables to select a sufficiently small portion of informative features in each dataset provided that
	$s_{1}$ and $s_{2}$ are small enough.
	\item[II)]  We investigate the first-order necessary and sufficient optimality conditions (see Theorem \ref{theo01-suff-necc} and Theorem \ref{theo01}) for SCL as well as  the existence and the uniqueness of its solution (see Theorem \ref{theo04}). One of the optimality conditions is associated with the $\alpha$-stationary point seen  Definition \ref{def3.1} that allows for algorithmic design conveniently.
	
	\item[III)]  We develop a gradient projection Newton algorithm (GPNA) that combines the gradient projection motivated by the $\alpha$-stationary point and the Newton step to accelerate the convergence. We prove that GPNA not only converges to a unique local minimizer of  problem \eqref{005} globally (see Theorem \ref{global-convergence}) but also has a quadratic convergence rate for SLCoRe and termination within finite steps for SCoRe (see Theorem \ref{theo07}) under a mild assumption. These nice convergence properties indicate that our proposed algorithm should behave excellently in terms of high accuracy and speed, which is testified by its outstanding numerical performance.
\end{itemize}

We note that SCL problem (\ref{005}) has a close link to the multi-model problem where multiple models based on the learned data distributions are used to make predictions \cite{Goldfeld1973,Richard1984,Xiao2018,Zhaolei2021}. In contrast, SCL  focuses on two groups of data not only for the prediction but also for exploring their inter-group relationships. To this end,  if two groups of data in the dataset are known, then SCL with $c=0$ (namely, no inter-group relationships are investigated) in problem (\ref{005}) can be deemed as a special case of the multi-model problem.

To end this section, we present the organization of this paper. The next section describes the notation that will be employed through this paper and displays some properties of the objective function of problem \eqref{005}. In  section \ref{sec:opt}, we establish the first-order necessary and sufficient optimality conditions as well as the existence and the uniqueness of the solutions to  problem \eqref{005}.  The algorithm GPNA and its convergence properties are provided in section \ref{sec:gra}. Numerical experiments on synthetic and real data are reported in section \ref{sec:num}, and some concluding remarks are given in the section \ref{sec:con}.

\section{Preliminaries}\label{sec:pre}

 {Before giving the main results, we define some notations that will be employed throughout the paper}.  {Let $[p]:=\{1,2,\ldots,p\},[n]:=\{1,2,\ldots,n\}$}.  We denote  sparse set $\Sigma_{s}^p$ in $\mathbb{R}^p$ by 
$$\Sigma_{s}^p:=\{\bt\in \mathbb{R}^{p}:\|\bt \|_{0}\leq s\},$$ where $s \ll p$ is an integer.  For a vector $\bt$, denote its neighborhood with a radius $\delta$ by $ N(\bt,\delta):=\{\bu\in\mathbb{R}^p: \|\bt-\bu\|< \delta\}$, and its support set by  $\Gamma(\bt):=\{i\in [p]:\theta_{i}\neq 0\}$. The complement set of $\Gamma$ is written as $\overline{\Gamma}$. For a given set $T$, its spanned subspace of $\mathbb{R}^p$ is denoted by $\mathbb{R}^{p}_{T}:=\{\bt\in\mathbb{R}^p: \Gamma(\bt)\subseteq T\}$. Let  $\bt_{\Gamma}$ be the subvector of $\bt$ indexed on $\Gamma$. We merge two vectors $\bt_1$ and $\bt_2$ as a single column vector via $(\bt_1;\bt_2):=(\bt_1^\top\bt_2^\top)^\top$. Finally, for a matrix $A\in\mathbb{R}^{n\times p}$, let $\lambda_{\max}(A)$ and  $\lambda_{\min}(A)$ present its largest and smallest eigenvalue, respectively, and $A_{TJ}$ denotes the sub-matrix containing rows indexed by ${T}$ and columns indexed by $J$. In particular, $A_{T:}:=A_{T[p]}$ and $A_{:J}:=A_{[n]J}$.

To characterize the projection of $\bt$ onto $\Sigma_{s}^p$, we denote $\bt^\downarrow_{i}$ the $i$th largest element in magnitude of $\bt$. Based on this,  projection  $\Pi_{\Sigma_{s}^p}(\bt)$ that is given by
\begin{eqnarray}
	\Pi_{\Sigma_{s}^p}(\bt):=\underset{\bu\in \Sigma_{s}^p}{\rm argmin}~\|\bt-\bu\|\nonumber
\end{eqnarray}
can be derived as follows: If $\bt^\downarrow_{s}=0$ or $\bt^\downarrow_{s}>\bt^\downarrow_{s+1}$, then it is unique, i.e., 
$$(\Pi_{\Sigma_{s}^p}(\bt))_{i}=
\begin{cases}
	\bt_{i},&~~\mid\bt_{i}\mid \geq \bt^{\downarrow}_{s},\\
	0,&~~\mid\bt_{i}\mid < \bt^{\downarrow}_{s}.
\end{cases}$$
If there are more than one equal to $\bt^\downarrow_{s}$, we can choose any one of them and let the rest be 0. If  $\bt^\downarrow_{s}=\bt^\downarrow_{s+1}\neq 0$, then 
$$(\Pi_{\Sigma_{s}^p}(\bt))_{i}=
\begin{cases}
	\bt_{i},&~~\mid\bt_{i}\mid> \bt^\downarrow_{s},\\
	\bt_{i} ~~{\rm or}~~ 0,&~~\mid\bt_{i}\mid= \bt^\downarrow_{s},\\
	0,&~~\mid\bt_{i}\mid< \bt^\downarrow_{s}.
\end{cases}$$
For example, for $\bt=\{2,4,3,-3,1\}$ and $\Sigma_{2}^5=\{\bu\in \mathbb{R}^{5}:\|\bu \|_{0}\leq 2\}$, we have $\Pi_{\Sigma_{2}^5}(\bt)=(0,4,3,0,0)^\top$ or $(0,4,0,-3,0)^\top$.

Below are some concepts that will be used in this paper.
\begin{definition}
	[$s$-regularity \cite{Beck}] A matrix $A\in \mathbb{R}^{n\times p}$ is called $s$-regular if its any $s$ columns are linearly independent.
\end{definition}

\begin{definition} [Strong smoothness \cite{Jala}]
	If  function $f$ is continuously differentiable, then for any $\bt,\bd\in \mathbb{R}^{p}$, we say that   function $f$ is strongly smooth on $\mathbb{R}^{p}$ with a parameter $L_f>0$ if it holds that
	$$f(\bt+\bd) \leq f(\bt)+\langle\nabla f(\bt), \bd\rangle+({L_f}/{2})\|\bd\|^{2}.$$
\end{definition}
\begin{definition} [Restricted strong convexity \cite{Bahm,ZXQ21,Agarwal,Shalev-Shwartz}]  
	If  function $f$ is  twice continuously differentiable, then for any $\bt, \bd\in \Sigma_r^p$ satisfying $\bt+\bd\in \Sigma_r^p$, we say that   function $f$ is $r$-restricted strongly convex on $\Sigma_r^p$ with a parameter $l_f>0$ if it holds that
	$$
	f(\bt+\bd) \geq f(\bt)+\langle\nabla f(\bt), \bd\rangle+ ({l_f}/{2})\|\bd\|^{2} \quad \text { or }~~~~ \langle \bd, \nabla^{2} f(\bt) \bd\rangle \geq  ({l_f}/{2})\|\bd\|^{2}.
	$$
	If these conditions hold for $l_f=0$, then $f$ is called $r$-restricted  convex on $\Sigma_r^p$.
\end{definition} 
We now give some properties of $f$ in problem (\ref{005}), including the strong smoothness and restricted strong convexity  as well as the Lipschitz continuity of its gradient and Hessian matrix.
\begin{proposition}\label{Pro1} Let $\bt:=(\bt_{1};\bt_{2})$ and $\ell=\ell_{log}$. Objective function $f$  in problem (\ref{005}) has the following properties.
	\begin{itemize}
		\item[1)] It is convex, twice continuously differentiable and 
		strongly smooth with parameter $L_f$ given by
		$$L_f:=\lambda_{\max}\left(\frac{1}{n}\left[\begin{array}{cc}
			( {a }/{4}+c)X^{\top}X & - cX^{\top}Z \\ - cZ^{\top}X & ( {b}/{4}+c)Z^{\top}Z \end{array}\right]\right),
		$$
		which indicates that $\nabla f$ is Lipschitz continuous with   parameter $L_f$ for any $\bt$ and $\bt'$,
		\begin{eqnarray}\label{Lip-grad}\|\nabla f(\bt)-\nabla f(\bt')\| \leq L_f \| \bt  -\bt'\|.\nonumber\end{eqnarray}
		\item[2)] Its Hessian matrix $\nabla^{2}f(\bt)$ takes the form of
		$$\nabla^{2}f(\bt)=\frac{1}{n}\begin{bmatrix} 
			X^{\top}(a  D_{1}+cI)X & - cX^{\top}Z \\ 
			-cZ^{\top}X &  Z^{\top}(b D_{2}+cI)   Z  \end{bmatrix},$$
		where $I$ is the identity matrix, $D_{1} $ and $D_2$ are two diagonal matrices with $$(D_1)_{ii}=\frac{\exp \langle \bx_{i},\bt_{1} \rangle }{(1+\exp \langle \bx_{i},\bt_{1} \rangle)^{2}},~i\in[n],$$
		$$(D_2)_{ii}=\frac{\exp \langle \bz_{i},\bt_{2} \rangle}{(1+\exp \langle \bz_{i},\bt_{2} \rangle)^{2}},~~i\in[n].$$
		Moreover, $\nabla^{2}f(\cdot)$ is Lipschitz continuous with   constant $C_f$, namely, 
		\begin{eqnarray}\label{Lip-hassian}\|\nabla^2 f(\bt)-\nabla^2 f(\bt')\| \leq C_f \| \bt  -\bt'\|,\end{eqnarray}
		for any $\bt$ and $\bt'$, where 
		$$C_f:=
		\frac{3\sqrt{2}}{n}\max\left\{ a  \max_{i\in[n]}\|\bx_i\|_1\lambda_{\max}(X^\top X),  b\max_{i\in[n]}\|\bz_i\|_1 \lambda_{\max}(Z^\top Z)\right\}.$$

		\item[3)] If matrix $[X~Z]$ is $(s_{1}+s_{2})$-regular, then it is $(s_{1}+s_{2})$-restricted strongly convex on $\Sigma^{p_{1}+p_{1}}_{s_1+s_2}$  with a positive parameter $l_f$ given by 
		\begin{eqnarray}\label{s-regular-1}l_f:=\min_{|T|\leq s_{1}+s_{2}}\lambda_{\min}\left(\frac{c}{n}\left[\begin{array}{cc}
				X^{\top}X & - X^{\top}Z \\ 
				- Z^{\top}X &  Z^{\top}Z \end{array}\right]_{TT}\right).\end{eqnarray}
	\end{itemize}
\end{proposition}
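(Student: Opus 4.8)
The plan is to derive all three items from a single direct computation of $\nabla f$ and $\nabla^2 f$. The coupling term $\frac{c}{2n}\|X\bb_1-Z\bb_2\|^2$ is a quadratic in $\bb=(\bb_1;\bb_2)$, so its gradient and Hessian are immediate; for the logistic part I would differentiate $\ell_{log}$ termwise using $\frac{d}{dt}\log(1+\exp t)=\sigma(t)$ and $\sigma'(t)=\sigma(t)(1-\sigma(t))=\exp(t)/(1+\exp t)^2$, where $\sigma$ denotes the logistic sigmoid. Summing over $i$ and assembling the two blocks reproduces exactly the Hessian of item 2), with $D_1,D_2$ the stated diagonal matrices, and twice continuous differentiability is evident from these expressions.

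For item 1), I would obtain convexity by writing $\nabla^2 f$ as a sum of positive semidefinite matrices: the logistic blocks $\frac{a}{n}X^\top D_1 X$ and $\frac{b}{n}Z^\top D_2 Z$ are nonnegative since $D_1,D_2\succeq 0$, while the remaining contribution is $\frac{c}{n}[X~-Z]^\top[X~-Z]\succeq 0$. For the smoothness constant I would use the scalar bound $0\le\sigma'(t)\le 1/4$, so $D_1\preceq\frac14 I$ and $D_2\preceq\frac14 I$; substituting these into the diagonal blocks gives, for every $\bb$,
\[
\nabla^2 f(\bb)\preceq \frac1n\begin{bmatrix}(a/4+c)X^\top X & -cX^\top Z\\ -cZ^\top X & (b/4+c)Z^\top Z\end{bmatrix},
\]
and since this bounding matrix is itself positive semidefinite, its operator norm equals its largest eigenvalue, yielding $0\preceq\nabla^2 f(\bb)\preceq L_f I$. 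Strong smoothness and the gradient bound \eqref{Lip-grad} then follow from the integral forms $f(\bb+\bd)=f(\bb)+\langle\nabla f(\bb),\bd\rangle+\int_0^1(1-t)\langle\bd,\nabla^2 f(\bb+t\bd)\bd\rangle\,dt$ and $\nabla f(\bb)-\nabla f(\bb')=\int_0^1\nabla^2 f(\bb'+t(\bb-\bb'))(\bb-\bb')\,dt$.

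The $\bb$-dependence of $\nabla^2 f$ sits entirely in $D_1$ and $D_2$, so $\nabla^2 f(\bb)-\nabla^2 f(\bb')$ is block diagonal with blocks $\frac{a}{n}X^\top(D_1-D_1')X$ and $\frac{b}{n}Z^\top(D_2-D_2')Z$, and its spectral norm is the maximum of the two block norms, which is already the $\max\{\cdot,\cdot\}$ appearing in $C_f$. For one block I would use that $D_1-D_1'$ is diagonal to get $\|X^\top(D_1-D_1')X\|\le\max_i|(D_1)_{ii}-(D_1')_{ii}|\,\lambda_{\max}(X^\top X)$, and then control each diagonal entry by the mean value theorem, $|(D_1)_{ii}-(D_1')_{ii}|=|\sigma'(\langle\bx_i,\bb_1\rangle)-\sigma'(\langle\bx_i,\bb_1'\rangle)|\le L\,|\langle\bx_i,\bb_1-\bb_1'\rangle|\le L\,\|\bx_i\|_1\|\bb_1-\bb_1'\|$, where $L$ bounds $|\sigma''|$ and the last step uses H\"older together with $\|\cdot\|_\infty\le\|\cdot\|$. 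Collecting the (possibly conservative) scalar constant yields \eqref{Lip-hassian}; the only mildly delicate point is this explicit Lipschitz bound on the link derivative $t\mapsto\sigma'(t)$, which is where the numerical factor in $C_f$ is produced.

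For item 3) I would split $\nabla^2 f$ once more into the logistic part and the coupling part. Fixing $\bd\in\Sigma_{s_1+s_2}^{p_1+p_2}$ with support $T:=\Gamma(\bd)$ and discarding the nonnegative logistic terms gives $\langle\bd,\nabla^2 f(\bb)\bd\rangle\ge\frac{c}{n}\|X\bd_1-Z\bd_2\|^2=\bd_T^\top M_{TT}\bd_T\ge\lambda_{\min}(M_{TT})\|\bd\|^2$ with $M:=\frac{c}{n}[X~-Z]^\top[X~-Z]$. Because negating columns does not change linear independence, $[X~-Z]$ inherits the $(s_1+s_2)$-regularity of $[X~Z]$, so for every $T$ with $|T|\le s_1+s_2$ the columns of $[X~-Z]_{:T}$ are independent and $M_{TT}=\frac{c}{n}[X~-Z]_{:T}^\top[X~-Z]_{:T}$ is positive definite. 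Since there are only finitely many such index sets, taking the minimum of the finitely many strictly positive smallest eigenvalues produces the positive constant $l_f$ of \eqref{s-regular-1} and establishes restricted strong convexity. I expect the real work to be bookkeeping rather than any single estimate: one must keep the block structure straight and, crucially, recognize that the restricted lower bound is carried entirely by the coupling Gram matrix, so that it is the regularity of $[X~Z]$ --- not of $X$ or $Z$ separately --- that forces $l_f>0$.
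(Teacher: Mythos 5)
Your argument is correct, and for items 1) and 3) it coincides with the paper's proof: the paper likewise gets smoothness from the scalar bound $t/(1+t)^2\le 1/4$ (equivalently $D_1,D_2\preceq \tfrac14 I$) so that $\lambda_{\max}(\nabla^2 f(\bb))\le L_f$, and for restricted strong convexity it uses exactly your decomposition $\nabla^2 f=A+B$ with $B=\tfrac{c}{n}[X~-Z]^\top[X~-Z]$, noting that $(s_1+s_2)$-regularity of $[X~Z]$ passes to $[X~-Z]$ so the coupling Gram matrix carries the lower bound $l_f>0$. The only genuine divergence is in item 2): the paper does not prove the Hessian Lipschitz property from scratch but imports it per dataset from the cited result (Lemma A.3 of the sparse logistic regression reference), obtaining constants $C_1=(3/n)\max_i\|\bx_i\|_1\lambda_{\max}(X^\top X)$ and $C_2$ analogously, and then assembles them by the triangle inequality, $\|\nabla^2 f(\bb)-\nabla^2 f(\bb')\|\le aC_1\|\bb_1-\bb_1'\|+bC_2\|\bb_2-\bb_2'\|\le\sqrt2\max\{aC_1,bC_2\}\|\bb-\bb'\|$, which is where the factor $3\sqrt2$ in $C_f$ comes from. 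You instead give a self-contained derivation: the Hessian difference is block diagonal (the coupling block is constant), its norm is the maximum of the two block norms, each block is bounded by $\max_i|(D_j)_{ii}-(D_j')_{ii}|$ times $\lambda_{\max}$ of the Gram matrix, and the diagonal entries are controlled by the mean value theorem applied to $\sigma'$ together with H\"older. Since $\sup|\sigma''|=1/(6\sqrt3)<3$ and your block-diagonal assembly avoids the $\sqrt2$, your route yields a Lipschitz constant strictly smaller than the stated $C_f$, so \eqref{Lip-hassian} follows a fortiori; what the paper's citation buys is brevity, while your version buys self-containedness and in fact a sharper constant.
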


\begin{proof} 
	1) It is easy to see that $f$ is convex and twice continuously differentiable. Since $t/(1+t)^{2}\leq 1/4$ for any $t\geq0$, it follows $\lambda_{\max}(\nabla^{2}f(\bt)) \leq L_f$ for any $\bt\in \mathbb{R}^{p_1+p_2}$. This can show that the gradient of $f$ is Lipschitz continuous with   parameter $L_f$ immediately. 
	
	2) It follows from \cite[Lemma A.3]{Wang} that $\nabla^2\ell(\bt_1;X)$ and both $\nabla^2\ell(\bt_2;Z)$  are Lipschitz continuous with   constants $$C_1:=(3/n) \max_{i\in[n]}\|\bx_i\|_1\lambda_{\max}(X^\top X),~~~~C_2:=(3/n) \max_{i\in[n]}\|\bz_i\|_1\lambda_{\max}(Z^\top Z).$$ Then we have
	\begin{eqnarray*} 
		\arraycolsep=1.4pt\def\arraystretch{1.5}
		\begin{array}{llll}
			\|\nabla^2 f(\bt)-\nabla^2 f(\bt')\|  &=& \|a  \nabla^2\ell(\bt_1;X) + b \nabla^2\ell(\bt_2;Z)- a  \nabla^2\ell(\bt_1';X) - b \nabla^2\ell(\bt_2';Z)\| \\
			&\leq&  a  C_1  \|  \bt_1 -\bt_1'\| + b C_2 \| \bt_2-\bt_2'\|\\
			&\leq&  \max\{ a  C_1, b C_2\} (\|  \bt_1 -\bt_1'\| +  \| \bt_2-\bt_2'\|)\\
			&\leq&  \sqrt{2}\max\{ a  C_1, b C_2\}  \|  \bt  -\bt '\|.
		\end{array}
	\end{eqnarray*}
	3) If  matrix $[X~Z]$ is $(s_{1}+s_{2})$-regular, then so is  matrix $[X~-Z]$. Note that
	$$\nabla^{2}f(\bt)=\frac{1}{n}\begin{bmatrix} 
		a  X^{\top}  D_{1} X & 0 \\ 
		0 &  b Z^{\top}  D_{2}   Z  \end{bmatrix} +
	\frac{c}{n}\begin{bmatrix} 
		X^{\top} X & - X^{\top}Z \\ 
		-Z^{\top}X &  Z^{\top}  Z  \end{bmatrix} =: A+ B.$$
	Clearly, both $A$ and $B$ are  positive semi-definite. Moreover, $B=(c/n)[X~-Z]^\top[X~-Z]$. According to the  $(s_{1}+s_{2})$-regular of the matrix $[X~Z]$, we can get $B_{TT}$ is  positive definite. Therefore, for any $\bd :=(\bd_1;\bd_2)\neq 0$ with $\|\bd_1\|_0\leq s_1$ and $\|\bd_2\|_0\leq s_2$, we have
	\begin{eqnarray*}
		\langle \bd,\nabla^{2} f(\bt)\bd\rangle   = \langle \bd,  (A+B) \bd\rangle  
		\geq  \langle \bd,  B \bd\rangle \geq l_f \|\bd\|^2>0. 
	\end{eqnarray*}
	This displays that the $(s_{1}+s_{2})$-restricted strong  convexity of $f(\bt)$ on $\Sigma_{s_1+s_2}^{p_1+p_2}$.
 {	The proof is complete.\qed}
	
\end{proof}

We note that the classical logistic regression which has been shown to be only strictly convex instead of being restricted strongly convex even though the assumption of the regularity of the sample matrix is imposed. However, the objective function of SLCoRe can be restricted strongly convex if the sample matrix is regular. In addition, if we only have one dataset, SLCoRe will degenerate into the classical sparse logistic regression. At this point, see the example in \cite{Wang}, similar results can be obtained. Similarly,  for the objective function of SCoRe, we easily obtain the following results.

\begin{proposition}\label{Pro2}   Let $\bt:=(\bt_{1};\bt_{2})$ and $\ell=\ell_{lin}$. Objective function $f$  in (\ref{005})  is convex, twice continuously differentiable and has Hessian matrix $\nabla^{2}f(\bt)$ in the form of
	$$\nabla^{2}f(\bt)=\frac{1}{n}\begin{bmatrix} 
		(a +c)X^{\top}X & - cX^{\top}Z \\ 
		-cZ^{\top}X &  (b+c)Z^{\top} Z  \end{bmatrix}=:Q.$$
	Moreover, it is	strongly smooth with   parameter $L_f:=\lambda_{\max}\left( Q\right)$ and thus $\nabla f$ is Lipschitz continuous with  parameter $L_f$.  If    $[X~Z]$ is $(s_{1}+s_{2})$-regular, then it is $(s_{1}+s_{2})$-restricted strongly convex on   $ \Sigma^{p_{1}+p_{1}}_{s_{1}+s_{2}}$ with a positive parameter $l_f>0$ given by 
	\begin{eqnarray}\label{s-regular}l_f:=\min_{\mid T\mid\leq s_{1}+s_{2}}\lambda_{\min}\left(Q_{TT}\right).
	\end{eqnarray}
\end{proposition}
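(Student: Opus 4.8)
The plan is to exploit that, with $\ell=\ell_{lin}$, the loss $\ell_{lin}(\bb;X,\by)=\tfrac12\|\by-X\bb\|^2$ is quadratic, so $f$ is a quadratic function of $(\bb_1;\bb_2)$ and the entire proposition reduces to elementary linear algebra for a \emph{constant} Hessian. First I would differentiate $f$ twice: from $f(\bb_1,\bb_2)=\tfrac1n\big[\tfrac a2\|\by-X\bb_1\|^2+\tfrac b2\|\by-Z\bb_2\|^2+\tfrac c2\|X\bb_1-Z\bb_2\|^2\big]$ a direct computation gives $\nabla_{\bb_1}f=\tfrac1n[(a+c)X^\top X\bb_1-cX^\top Z\bb_2-aX^\top\by]$ together with the symmetric expression in $\bb_2$, and differentiating once more produces exactly the block matrix $Q$, which is manifestly independent of $\bb$. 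Twice continuous differentiability is then immediate because $f$ is a polynomial.

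I would establish convexity and strong smoothness together from the identity
\begin{eqnarray*}
\langle\bd,Q\bd\rangle=\frac1n\left[a\|X\bd_1\|^2+b\|Z\bd_2\|^2+c\|X\bd_1-Z\bd_2\|^2\right]\ge 0,\qquad \bd=(\bd_1;\bd_2),
\end{eqnarray*}
which shows $Q$ is positive semidefinite and hence $f$ is convex. Since $f$ is quadratic, Taylor's formula holds exactly, $f(\bb+\bd)=f(\bb)+\langle\nabla f(\bb),\bd\rangle+\tfrac12\langle\bd,Q\bd\rangle$; bounding $\langle\bd,Q\bd\rangle\le\lambda_{\max}(Q)\|\bd\|^2$ gives strong smoothness with $L_f=\lambda_{\max}(Q)$. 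Lipschitz continuity of $\nabla f$ follows at once, as $\nabla f(\bb)-\nabla f(\bb')=Q(\bb-\bb')$ and $\|Q\|=\lambda_{\max}(Q)$ for the symmetric positive semidefinite $Q$.

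For restricted strong convexity I would take any $\bd=(\bd_1;\bd_2)$ with $\|\bd_1\|_0\le s_1$, $\|\bd_2\|_0\le s_2$, set $T:=\Gamma(\bd)$ so that $|T|\le s_1+s_2$, and bound $\langle\bd,Q\bd\rangle=\langle\bd_T,Q_{TT}\bd_T\rangle\ge\lambda_{\min}(Q_{TT})\|\bd\|^2\ge l_f\|\bd\|^2$. The only point carrying real content is the strict positivity of $l_f$ in \eqref{s-regular} under $(s_1+s_2)$-regularity of $[X~Z]$: if $\langle\bd,Q\bd\rangle=0$ for some admissible sparse $\bd$, then since $a,b,c>0$ the displayed identity forces $X\bd_1=0$ and $Z\bd_2=0$, hence $[X~Z](\bd_1;\bd_2)=0$ with $\|(\bd_1;\bd_2)\|_0\le s_1+s_2$; regularity makes the corresponding columns linearly independent and so $\bd=0$. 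Thus $Q_{TT}\succ0$ for every $T$ with $|T|\le s_1+s_2$, and as there are finitely many such $T$ the minimum $l_f$ is attained and positive. I anticipate no genuine obstacle beyond keeping the two-block bookkeeping consistent; I would only remark that, unlike \cref{Pro1} where $l_f$ is taken through the coupling block $\tfrac cn[X~-Z]^\top[X~-Z]$ alone, here $l_f$ is defined through the full Hessian $Q$, which yields a (weakly) larger and thus sharper curvature constant by the very same argument.
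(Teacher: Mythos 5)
Your proposal is correct and is essentially the argument the paper intends: the paper gives no separate proof of \cref{Pro2}, stating that it follows ``similarly'' to \cref{Pro1}, and your computations—constant Hessian $Q$, exact quadratic Taylor expansion giving convexity and strong smoothness with $L_f=\lambda_{\max}(Q)$, the restriction $\langle\bd,Q\bd\rangle=\langle\bd_T,Q_{TT}\bd_T\rangle\geq\lambda_{\min}(Q_{TT})\|\bd\|^2$, and positivity of $l_f$ from the $(s_1+s_2)$-regularity of $[X~Z]$ via the identity $\langle\bd,Q\bd\rangle=\frac1n\big[a\|X\bd_1\|^2+b\|Z\bd_2\|^2+c\|X\bd_1-Z\bd_2\|^2\big]$—are exactly the natural specialization of that argument to the quadratic loss. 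Your closing remark is also accurate: defining $l_f$ through the full $Q_{TT}$ rather than through the coupling block $\frac{c}{n}[X~-Z]^\top[X~-Z]$ alone (as in \eqref{s-regular-1}) gives a weakly larger curvature constant.
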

It is worth mentioning that the main  theorems in the sequel are established based on the assumption of $s$-regularity. So, 
to end this section, we would like to see which types of matrices $[X~Z]$ could satisfies $s$-regularity. To proceed with that, we introduce the famous Restricted Isometry Property (RIP, \cite{candes2005decoding}). A matrix $\Phi\in\R^{n\times p}$ is said to satisfy $s$-order RIP, if there exists a constant $\delta_s\in[0,1)$ such that
\begin{eqnarray*}
(1-\delta_s)\|\bt\|^2 \leq \|\Phi\bt\|^2 \leq (1+\delta_s)\|\bt\|^2 
\end{eqnarray*}
for all vectors $\bt\in\Sigma_s^p$. This definition is equivalent to $$(1-\delta_s) \leq \lambda_{\min}(\Phi_{:T}^\top \Phi_{:T}) \leq \lambda_{\max}(\Phi_{:T}^\top \Phi_{:T}) \leq  (1+\delta_s), ~~\forall~ |T|\leq s.$$
Therefore, matrices satisfying $s$-order RIP  must satisfy $s$-regularity. On the other hand, it has proven in \cite{candes2006near,baraniuk2008simple} that random Gaussian matrix, random binary matrix, and Fourier matrix satisfy $s$-order RIP with a high probability when $s$ is small enough. Hence, these matrices also satisfy $s$-regularity.

\section{Optimality Conditions}\label{sec:opt}

This section establishes the optimality conditions of SCL being useful for the algorithmic development,  before which, for notational convenience, we define 
\begin{eqnarray}\label{notation-1}
	\arraycolsep=1.4pt\def\arraystretch{1.5}
	\begin{array}{rll}
		\bt&:=&(\bt_1;\bt_2),\\
		\nabla_{i} f (\bt)&:=&\nabla_{\bt_{i}} f (\bt ),~~~~i=1,2,\\
		\Sigma_i&:=& \Sigma_{s_i}^{p_i},~\hspace{12mm} i=1,2,\\
		\Sigma&:=& \{\bt\in\mathbb{R}^{p_1+p_2}:\bt_1\in \Sigma_1,\bt_2\in \Sigma_2\},\\
		s&:=&s_1+s_2.
	\end{array}
\end{eqnarray}
Similar rules are also applied for $\bt_{1}^{\ast}$ and $\bt_{2}^{\ast}$. Based on these notation, we now establish the first-order necessary and sufficient  optimality conditions for  problem (\ref{005}).
\begin{theorem}\label{theo01-suff-necc} Let $\bt^{*}$ be a point that satisfies
	\begin{eqnarray}\label{l1-KKT}
		\arraycolsep=1.4pt\def\arraystretch{1.5}
		\begin{array}{rll}
			(\nabla_{j}f(\bt^{*}))_{i} =0,&~i\in \Gamma(\bt_{j}^{*}), &~~{\rm if}~~ \|\bt_{j}^{*}\|_{0}=s_{j},\\
			\nabla_{j}f(\bt^{*})=0,&&~~{\rm if}~~\|\bt_{j}^{*}\|_{0}<s_{j},
		\end{array}
	\end{eqnarray}
	for $j=1,2$. Then $\bt^{*}$ is a local minimizer of  (\ref{005}) if and only if it satisfies (\ref{l1-KKT}).  
\end{theorem}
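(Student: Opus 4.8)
The plan is to establish the two implications of the equivalence separately, leaning on the block-separable structure of the constraint set $\Sigma$ and on the global convexity of $f$ supplied by \cref{Pro1,Pro2}. Throughout I write $T_j:=\Gamma(\bb_j^*)$ for the support of each block, and I treat the two blocks $j=1,2$ independently because the cardinality constraints $\|\bb_1\|_0\le s_1$ and $\|\bb_2\|_0\le s_2$ decouple.

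For necessity (local minimizer $\Rightarrow$ \eqref{l1-KKT}), I would freeze one block and perturb the other coordinatewise. If $\bb^*$ is a local minimizer of the joint problem, then fixing $\bb_2=\bb_2^*$ makes $\bb_1^*$ a local minimizer of $f(\cdot,\bb_2^*)$ over $\Sigma_{s_1}^{p_1}$ (and symmetrically for the other block). Consider a block $j$ with $\|\bb_j^*\|_0=s_j$ and any $i\in T_j$: since $\beta^*_{j,i}\neq0$, the perturbations $\bb_j^*\pm t e_i$ (with $e_i$ the $i$th standard basis vector) keep that coordinate nonzero and hence keep the support size equal to $s_j$ for all small $t>0$, so both signs remain feasible; two-sided feasibility together with local minimality forces the directional derivative $(\nabla_j f(\bb^*))_i$ to vanish. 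For a block with $\|\bb_j^*\|_0<s_j$, every coordinate direction $e_i$ is admissible, because moving along a currently-zero coordinate raises the support size only to $\|\bb_j^*\|_0+1\le s_j$; the same two-sided argument then yields $\nabla_j f(\bb^*)=0$ on all coordinates, which is exactly the second line of \eqref{l1-KKT}.

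For sufficiency (\eqref{l1-KKT} $\Rightarrow$ local minimizer), the crucial geometric observation is that near a full-support block the feasible set is locally a fixed coordinate subspace. Let $\delta>0$ be the smallest magnitude $|\beta^*_{j,i}|$ over all $i\in T_j$ and all blocks $j$ with $\|\bb_j^*\|_0=s_j$. Whenever $\|\bb_j-\bb_j^*\|<\delta$, continuity keeps every support coordinate nonzero, so $\Gamma(\bb_j)\supseteq T_j$; combined with $\|\bb_j\|_0\le s_j=|T_j|$ this forces $\Gamma(\bb_j)=T_j$, i.e.\ $\bb_j\in\R^{p_j}_{T_j}$, so the difference $\bb_j-\bb_j^*$ is supported on $T_j$. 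Consequently, for every feasible $\bb$ in $N(\bb^*,\delta)$ I can evaluate $\langle\nabla_j f(\bb^*),\bb_j-\bb_j^*\rangle$ block by block: when $\|\bb_j^*\|_0=s_j$ the gradient vanishes on $T_j$ and $\bb_j-\bb_j^*$ lives on $T_j$, while when $\|\bb_j^*\|_0<s_j$ the whole gradient $\nabla_j f(\bb^*)$ vanishes. In either case the inner product is $0$, and summing over the two blocks gives $\langle\nabla f(\bb^*),\bb-\bb^*\rangle=0$. Invoking convexity, $f(\bb)\ge f(\bb^*)+\langle\nabla f(\bb^*),\bb-\bb^*\rangle=f(\bb^*)$, which is the desired local minimality.

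I expect the sufficiency direction, and specifically the localization step, to be the main obstacle: one must argue that the radius $\delta$ can be chosen uniformly positive so that no nearby feasible point can shed an active support coordinate or acquire a new nonzero. This is precisely where the strict positivity of the smallest-in-magnitude nonzero entry and the hard cardinality bound are used in tandem. I also note that convexity is indispensable here — without it the vanishing of the first-order term would only certify stationarity rather than a genuine local minimum — so the appeal to \cref{Pro1,Pro2} is what upgrades the stationarity condition \eqref{l1-KKT} to local optimality.
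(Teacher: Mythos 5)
Your proof is correct, and it diverges from the paper's only in the necessity direction. For sufficiency you follow essentially the same route as the paper: the radius $\delta$ given by the smallest-in-magnitude nonzero entry of the full-support blocks, the observation that any feasible $\bb$ within distance $\delta$ must have $\Gamma(\bb_j)=\Gamma(\bb_j^*)$ for those blocks (inclusion by continuity, equality by the cardinality cap), the resulting vanishing of $\langle\nabla f(\bb^*),\bb-\bb^*\rangle$, and convexity to conclude $f(\bb)\ge f(\bb^*)$ on $\Sigma\cap N(\bb^*,\delta)$. For necessity, however, the paper does not perturb coordinates directly: it invokes the variational-analysis fact that a local minimizer satisfies $-\nabla f(\bb^*)\in\mathcal{N}_{\Sigma}(\bb^*)$, splits the normal cone over the product $\Sigma_1\times\Sigma_2$, and reads off \eqref{l1-KKT} from the known explicit formula for the normal cone of a sparsity set. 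Your argument instead fixes one block and uses two-sided feasible perturbations $\pm t e_i$ (along support coordinates when the block is saturated, along all coordinates when it is not) to force the corresponding partial derivatives to vanish. This is a legitimately more elementary and self-contained derivation --- in effect you re-derive exactly the slice of the normal-cone description that is needed --- at the cost of a slightly longer argument; the paper's citation-based route is shorter and makes clear how the condition sits inside the general theory of sparsity-constrained problems. One small point worth stating explicitly in your write-up: when no block has full support, your $\delta$ is a minimum over an empty index set, but then \eqref{l1-KKT} gives $\nabla f(\bb^*)=0$ and convexity already yields (global) minimality, so the sufficiency argument is unaffected.
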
 
\begin{proof}Necessity. Based on \cite[Theorem 6.12]{RW09}, a local minimizer $\bt^{*}$ of the  problem (\ref{005}) must satisfy that $- \nabla f(\bt^{*}) \in  \mathcal{N}_{\Sigma}(\bt^{*}) 
	= \mathcal{N}_{\Sigma_1}(\bt^{*}_1) \times \mathcal{N}_{\Sigma_2}(\bt^{*}_2),$ 
	where $\mathcal{N}_{\Sigma}(\bt^{*})$ is the normal cone of $\Sigma$ at $\bt^{*}$ and the equality is by \cite[Theorem 6.41]{RW09}. Then the explicit expression (see \cite[Table 1]{Panl}) of  normal cone $\mathcal{N}_{\Sigma_j}(\bt^{*}_j)$ enable us to derive \eqref{l1-KKT} immediately.
	
	Sufficiency. Let $\bt^{*}$ satisfy \eqref{l1-KKT}. The convexity of $f$ leads to
	\begin{eqnarray}\label{convex-f}
		f(\bt)\geq f(\bt^{*})+\langle \nabla_{1} f(\bt^{*}), \bt_{1}-\bt_{1}^{*}\rangle+\langle\nabla_{2} f(\bt^{*}), \bt_{2}-\bt_{2}^{*}\rangle.\nonumber
	\end{eqnarray}
	If there is a $\delta>0$ such that, for any $\bt\in \Sigma\cap N(\bt^*,\delta)$,
	\begin{eqnarray}\label{fact-1}
		\langle \nabla_{1}f(\bt^{*}),\bt_{1}-\bt_{1}^{*}\rangle=\langle \nabla_{2}f(\bt^{*}),\bt_{2}-\bt_{2}^{*}\rangle=0, 
	\end{eqnarray} 
	then the conclusion can be made immediately. Therefore, we next to show \eqref{fact-1}. 
	In fact, by \eqref{l1-KKT}, we note that $\nabla_{j}f(\bt^{*})=0$ if   $\|\bt_{j}^{*}\|_{0}<s_{j}$, which indicates it suffices to consider the worst case of $\|\bt_{j}^{*}\|_{0}=s_{j},j=1,2$. Under such a case, we define 
	$$ \delta   := \min_{j=1,2} \min\limits_{i \in \Gamma(\bt_{j}^{*})} \mid(\bt_{j}^{*})_i\mid.$$
	Then for any  $\bt\in N(\bt^{*},\delta)\cap \Sigma$, we have  
	\begin{eqnarray*}
	\forall~i \in \Gamma(\bt_{j}^{*}),~~	\mid(\bt_{j})_i\mid&=&\mid(\bt_{j})_i^*- (\bt_{j})_i^*+(\bt_{j})_i\mid\\
		&\geq& \mid(\bt_{j})_i^*\mid-\mid (\bt_{j})_i^*-(\bt_{j})_i\mid\\
		&\geq& \mid(\bt_{j})_i^*\mid-\| \bt_{j}^*-\bt_{j}\|\\
		&>&  \mid(\bt_{j})_i^*\mid-\delta\\
		&\geq&0.\end{eqnarray*}
The above relationship means that $i \in \Gamma(\bt_{j}^{*})$ (i.e. $(\bt_{j}^{*})_i\neq0$ ) implies $|(\bt_{j})_i|>0$ (i.e. $(\bt_{j})_i\neq0$), which leads to $\Gamma(\bt_{j}^{*})\subseteq \Gamma(\bt_{j})$. This by $\|\bt_{j}\|_{0}\leq s_{j}=\|\bt_{j}^{*}\|_0=|\Gamma(\bt_{j}^{*})|$ allows us to yield that
	$$\Gamma(\bt_{j}^{*})= \Gamma(\bt_{j}), j=1,2,~\forall~\bt\in N(\bt^{*},\delta)\cap  \Sigma.$$
	Using the above fact and \eqref{l1-KKT} derives that
	\begin{eqnarray*} 
		\langle \nabla_{j}f(\bt^{*}),\bt_{j}-\bt_{j}^{*}\rangle=\langle (\nabla_{j}f(\bt^{*}))_{\Gamma(\bt_{j}^{*})},(\bt_{j}-\bt_{j}^{*})_{\Gamma(\bt_{j}^{*})}\rangle=0.
	\end{eqnarray*} 
	 {	The proof is complete.\qed}
\end{proof}
Based on Theorem \ref{theo01-suff-necc}, however, the necessary and sufficient  optimality conditions \eqref{l1-KKT} mean that there is no useful information for the case $i\notin \Gamma(\bt_{j}^{*})$ when $\|\bt_{j}^{*}\|_0=s_j$. So, we introduce the concept of the $\alpha$-stationary point of  (\ref{005}).
\begin{definition} \label{def3.1}
	We say that $\bt^{\ast}$ is an $\alpha$-stationary point of  problem (\ref{005}) if there exists an $\alpha>0$ such that
	$$\bt_{1}^{*} \in \Pi_{\Sigma_1} (\bt_{1}^{*}-\alpha \nabla_{1} f (\bt^{*}) ),\quad \bt_{2}^{*} \in \Pi_{\Sigma_2}\left(\bt_{2}^{*}-\alpha \nabla_{2} f\left(\bt^{*}\right)\right).$$
\end{definition}

If there is only one variable, the definition of the $\alpha$-stationary points is the same as that in  \cite{Beck, Panl} which allows us to derive its explicit expression as follows.
\begin{lemma}\label{lamma-3.1}
	For a given $\alpha>0$,  the point $ \bt^{*} $ is an $\alpha$-stationary point of  problem (\ref{005}) if and only if for $j=1,2$, it satisfies  
	\begin{eqnarray}\label{l1}
		\arraycolsep=1.4pt\def\arraystretch{1.5}
		\begin{array}{cl}
			\alpha (\nabla_{j}f(\bt^{*}))_{i}
			\left\{\begin{array}{ll}
				=0,&~i\in \Gamma(\bt_{j}^{*}),\\
				\leq (\bt^*)^\downarrow_{s}, &~i\in\overline{\Gamma}(\bt_{j}^{*}),\\
			\end{array} \right.&~~{\rm if}~~ \|\bt_{j}^{*}\|_{0}=s_{j},\\
			\nabla_{j}f(\bt^{*})=0,&~~{\rm if}~~\|\bt_{j}^{*}\|_{0}<s_{j}.
		\end{array}
	\end{eqnarray}
\end{lemma}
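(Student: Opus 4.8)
The plan is to treat the two inclusions in \cref{def3.1} separately for $j=1,2$, since they decouple across the two blocks. Fixing $j$ and writing $\bu := \bb_{j}^{*}-\alpha\nabla_{j}f(\bb^{*})$, the task reduces to characterizing when $\bb_{j}^{*}\in\Pi_{\Sigma_j}(\bu)$ in terms of the entries of $\nabla_{j}f(\bb^{*})$. For this I would invoke the explicit description of the projection onto $\Sigma_{s_j}^{p_j}$ recalled in \cref{sec:pre}: a feasible vector is a projection of $\bu$ exactly when it agrees with $\bu$ on a set of coordinates carrying the $s_j$ largest magnitudes of $\bu$ and vanishes elsewhere. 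Because any $\alpha$-stationary $\bb_{j}^{*}$ necessarily lies in $\Sigma_j$, its support $\Gamma(\bb_{j}^{*})$ will play the role of this selected index set, so the whole argument becomes a matter of translating ``the kept coordinates are exactly $\Gamma(\bb_{j}^{*})$ and they rank among the top $s_j$'' into conditions on $\nabla_{j}f(\bb^{*})$.

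I would then split into the two regimes appearing in \eqref{l1}. When $\|\bb_{j}^{*}\|_{0}=s_{j}$, the support is full, so every projection of $\bu$ keeps exactly $s_j$ coordinates. On $\Gamma(\bb_{j}^{*})$ the projection retains $u_i$, which forces $(\bb_{j}^{*})_i=u_i$ and hence $\alpha(\nabla_{j}f(\bb^{*}))_i=0$; on $\overline{\Gamma}(\bb_{j}^{*})$ we have $(\bb_{j}^{*})_i=0$ and $u_i=-\alpha(\nabla_{j}f(\bb^{*}))_i$, so requiring these discarded magnitudes not to exceed the smallest retained magnitude $(\bb_{j}^{*})^{\downarrow}_{s_j}$ yields $\alpha|(\nabla_{j}f(\bb^{*}))_i|\le(\bb_{j}^{*})^{\downarrow}_{s_j}$. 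When $\|\bb_{j}^{*}\|_{0}<s_{j}$, there is unused room in the sparsity budget, so optimality of the projection forces every off-support entry of $\bu$ to vanish; since $u_i=-\alpha(\nabla_{j}f(\bb^{*}))_i$ off the support and $u_i=(\bb_{j}^{*})_i$ on it (again giving a zero gradient there), this collapses to $\nabla_{j}f(\bb^{*})=0$.

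For the converse direction I would verify that each stated condition makes $\bu$ agree with $\bb_{j}^{*}$ on $\Gamma(\bb_{j}^{*})$ while ensuring the off-support entries of $\bu$ are dominated in magnitude (and, in the under-full case, are exactly zero), so that $\bb_{j}^{*}$ attains the minimal distance and therefore belongs to $\Pi_{\Sigma_j}(\bu)$. The one delicate point, and the step I expect to require the most care, is the set-valued nature of the projection when magnitudes tie at the threshold (the branch $|\beta_i|=\bb^{\downarrow}_{s}$ in the projection formula of \cref{sec:pre}). It is precisely this ambiguity that produces the inequality $\le$ rather than a strict one in \eqref{l1}, and I would handle it by arguing only that $\bb_{j}^{*}$ is \emph{a} minimizer, i.e.\ a member of $\Pi_{\Sigma_j}(\bu)$ rather than the unique one, so that coordinates whose magnitude equals $(\bb_{j}^{*})^{\downarrow}_{s_j}$ are allowed to be either kept or dropped.
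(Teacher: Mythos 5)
Your proposal is correct and follows essentially the route the paper intends: the two blocks decouple by \cref{def3.1}, and each inclusion is translated through the explicit hard-thresholding projection formula recalled in \cref{sec:pre} (splitting into the full-support case, where kept coordinates force zero gradient entries and discarded ones are dominated by the smallest kept magnitude, and the under-full case, where the slack in the sparsity budget forces the whole gradient block to vanish), which is precisely the single-variable characterization of Beck--Eldar and Pan et al.\ that the paper invokes without writing out. The only point of divergence is cosmetic: your off-support bound $\alpha|(\nabla_{j}f(\bb^{*}))_{i}|\leq (\bb_{j}^{*})^{\downarrow}_{s_{j}}$ is the standard per-block form with the absolute value, whereas the lemma as printed writes $(\bb^{*})^{\downarrow}_{s}$ without it---an apparent typographical slip in the statement rather than a difference in argument.
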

Comparing conditions \eqref{l1-KKT} and \eqref{l1}, the latter provides more information for the case of $i\in\overline{\Gamma}(\bt_{j}^{*})$. It can be clearly seen that the latter is a stronger condition and  suffices to the former.  

The following result reveals the relationships among the $\alpha$-stationary point and the global/local minimizers of problem (\ref{005}).
\begin{theorem}\label{theo01} Let $\bt^{*}$ be an $\alpha$-stationary point of problem (\ref{005}), then it is a local minimizer. Furthermore, if $\|\bt_{1}^{*}\|_{0}<s_{1},\|\bt_{2}^{*}\|_{0}<s_{2}$, then it is also a global minimizer. Conversely, if $\bt^{*}$ is a  global minimizer of problem (\ref{005}), then it is an $\alpha$-stationary point with $0<\alpha< 1/L_f$.
\end{theorem}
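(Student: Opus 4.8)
The plan is to treat the three assertions separately, dispatching the two forward implications quickly and reserving the main effort for the converse. For the first claim, I would invoke \cref{lamma-3.1} to rewrite $\alpha$-stationarity as the explicit condition \eqref{l1}. Since $\alpha>0$, the equality $\alpha(\nabla_j f(\bb^*))_i=0$ is equivalent to $(\nabla_j f(\bb^*))_i=0$, so \eqref{l1} contains \eqref{l1-KKT} as a sub-condition while the extra inequalities on $\overline{\Gamma}(\bb_j^*)$ only strengthen it. Hence \eqref{l1} implies \eqref{l1-KKT}, and \cref{theo01-suff-necc} immediately gives that $\bb^*$ is a local minimizer. For the second claim, the hypotheses $\|\bb_1^*\|_0<s_1$ and $\|\bb_2^*\|_0<s_2$ place both blocks in the second case of \eqref{l1}, forcing $\nabla_1 f(\bb^*)=\nabla_2 f(\bb^*)=0$, i.e. $\nabla f(\bb^*)=0$. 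By the convexity of $f$ (\cref{Pro1} and \cref{Pro2}), a point with vanishing gradient minimizes $f$ over all of $\R^{p_1+p_2}$, and therefore over the subset $\Sigma$; since $\bb^*\in\Sigma$, it is a global minimizer.

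The substance lies in the converse. Suppose $\bb^*$ is a global minimizer and fix $0<\alpha<1/L_f$. I would set $\bz_j:=\bb_j^*-\alpha\nabla_j f(\bb^*)$ and pick any $\bw_j\in\Pi_{\Sigma_j}(\bz_j)$, writing $\bw:=(\bw_1;\bw_2)\in\Sigma$. Because $\bb_j^*\in\Sigma_j$ while $\bw_j$ is a nearest point of $\Sigma_j$ to $\bz_j$, the projection inequality $\|\bz_j-\bw_j\|^2\leq\|\bz_j-\bb_j^*\|^2$ holds; expanding both sides and cancelling the common term $\alpha^2\|\nabla_j f(\bb^*)\|^2$ yields $\|\bb_j^*-\bw_j\|^2\leq -2\alpha\langle\nabla_j f(\bb^*),\bw_j-\bb_j^*\rangle$. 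Summing over $j=1,2$ gives
\[
\|\bb^*-\bw\|^2\leq -2\alpha\,\langle\nabla f(\bb^*),\,\bw-\bb^*\rangle .
\]

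Next I would feed this into the strong smoothness of $f$ (\cref{Pro1} and \cref{Pro2}) applied with the direction $\bd=\bw-\bb^*$, namely $f(\bw)\leq f(\bb^*)+\langle\nabla f(\bb^*),\bw-\bb^*\rangle+(L_f/2)\|\bw-\bb^*\|^2$. Substituting the bound $\langle\nabla f(\bb^*),\bw-\bb^*\rangle\leq -\tfrac{1}{2\alpha}\|\bw-\bb^*\|^2$ produces
\[
f(\bw)\leq f(\bb^*)+\tfrac12\bigl(L_f-\tfrac1\alpha\bigr)\|\bw-\bb^*\|^2 .
\]
Here the choice $\alpha<1/L_f$ is decisive, since it makes the coefficient $L_f-1/\alpha$ strictly negative. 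As $\bw\in\Sigma$ and $\bb^*$ is a global minimizer, $f(\bw)\geq f(\bb^*)$, so the last display forces $\tfrac12(L_f-1/\alpha)\|\bw-\bb^*\|^2\geq 0$; combined with $L_f-1/\alpha<0$ this yields $\|\bw-\bb^*\|=0$, i.e. $\bw=\bb^*$. Consequently $\bb_j^*\in\Pi_{\Sigma_j}(\bz_j)$ for $j=1,2$, which is exactly $\alpha$-stationarity.

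I expect the converse to be the only genuine obstacle. Its delicate points are keeping the two-block structure consistent — the projection decouples across the two blocks whereas strong smoothness is a single global inequality on $\R^{p_1+p_2}$ — and using the \emph{strict} sign of $L_f-1/\alpha$ to conclude $\bw=\bb^*$ rather than merely bounding the gap $f(\bw)-f(\bb^*)$.
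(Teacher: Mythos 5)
Your proof is correct and takes essentially the same route as the paper: the local-minimizer claim follows from noting that \eqref{l1} implies \eqref{l1-KKT} and invoking \cref{theo01-suff-necc}, and the converse rests on the same two ingredients the paper uses—the projection inequality at $\bb_j^*-\alpha\nabla_j f(\bb^*)$ and strong smoothness with $0<\alpha<1/L_f$—merely organized as a direct argument (any projected point must coincide with $\bb^*$) instead of the paper's proof by contradiction. As a minor bonus, you also spell out the ``furthermore'' part via $\nabla f(\bb^*)=0$ and convexity, which the paper's proof leaves implicit.
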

\begin{proof}Since condition \eqref{l1} imply \eqref{l1-KKT} and a point satisfying \eqref{l1-KKT} is a local minimizer by Theorem \ref{theo01-suff-necc}, an $\alpha$-stationary point of  (\ref{005}) is a local minimizer.
	
	Conversely, suppose that a  global minimizer  $\bt^{*}$ of problem (\ref{005}) is not an  $\alpha$-stationary point with $0<\alpha< 1/L_f$, that is, there exists $\be^*_{1}\neq \bt_{1}^{*}$ or $\be^*_{2}\neq \bt_{2}^{*}$ such that
	$$\be^*_{1} \in \Pi_{\Sigma_1}\left(\bt_{1}^{*}-\alpha \nabla_{1} f\left(\bt^{*}\right)\right)~~\text{or}~~\be^*_{2} \in \Pi_{\Sigma_2}\left(\bt_{2}^{*}-\alpha \nabla_{2} f\left(\bt^{*}\right)\right).$$
	Without loss of any generality, we have  both  of the above  conditions. Then
	$$\|\be^*_{j}-\bt_{j}^{*}+\alpha \nabla_{j} f(\bt^{*})\|^{2}\leq \|\bt_{j}^{*}-\bt_{j}^{*}+\alpha \nabla_{j} f(\bt^{*})\|^{2}, ~~j=1,2,$$
	by the definition of   projection $\Pi(\cdot)$, which implies
	$$ \langle\be^*_{j}-\bt_{j}^{*}, \nabla_{j} f(\bt^{*})\rangle\leq-(1/{2\alpha})\|\be^*_{j}-\bt_{j}^{*}\|^{2}, ~~i=1,2.$$
	Using this condition and the strong smoothness of $f$   results in
	\begin{eqnarray*}
		\arraycolsep=1.4pt\def\arraystretch{1.5}
		\begin{array}{cl}
			f(\be^*)&\leq~f(\bt^{*})+ \langle   \nabla f(\bt^{*}), \be^*-\bt^{*}\rangle+({L_f }/{2})\|\be^*-\bt^{*}\|^{2}\\
			&\leq f(\bt^{*})+\left({L_f }/{2}- {1}/({2\alpha})\right) \|\be^* -\bt ^{*}\|^{2}  <f(\bt^{*}),
		\end{array}
	\end{eqnarray*}
	where the last inequality is from $0<\alpha< 1/L_f$. The above condition contradicts with the optimality of $\bt^{*}$. So $\bt^{*}$  is  an  $\alpha$-stationary point with $0<\alpha< 1/L_f$.  {The proof is complete.\qed}
\end{proof}
To end this section, we would like to see the existence and uniqueness of  solutions to problem (\ref{005}), which is revealed by the following theorem.
\begin{theorem}\label{theo04} If matrix $[X~Z]$ is $s$-regular, then the global minimizer of problem (\ref{005}) exists, and the local minimizers are finitely many and each of them is unique.
\end{theorem}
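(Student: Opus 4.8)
The plan is to exploit the fact that the feasible set $\Sigma$ is a \emph{finite union} of the coordinate subspaces $\R^{p_1}_{T_1}\times\R^{p_2}_{T_2}$ taken over index sets with $|T_1|=s_1$ and $|T_2|=s_2$, and that on each such subspace $f$ is genuinely strongly convex. Indeed, since $[X~Z]$ is $s$-regular, \cref{Pro1} (for $\ell_{log}$) and \cref{Pro2} (for $\ell_{lin}$) guarantee that $f$ is $s$-restricted strongly convex with some $l_f>0$ given by \eqref{s-regular-1} or \eqref{s-regular}; that is, $\langle\bd,\nabla^2 f(\bb)\bd\rangle\ge l_f\|\bd\|^2$ for all $\bb$ and every $\bd=(\bd_1;\bd_2)$ with $\|\bd_1\|_0\le s_1,\ \|\bd_2\|_0\le s_2$. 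For a fixed pair $(T_1,T_2)$ with $|T_j|=s_j$, any direction $\bd$ supported on $T_1\times T_2$ meets this sparsity budget, so the restriction $f|_{\R^{p_1}_{T_1}\times\R^{p_2}_{T_2}}$ has Hessian bounded below by $l_f$ in the subspace directions and is therefore strongly convex.

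For existence, I would argue subspace by subspace. On each of the $\binom{p_1}{s_1}\binom{p_2}{s_2}$ subspaces above, strong convexity forces $f$ to be coercive, so $f$ attains a unique minimizer there. Since $\Sigma$ is exactly the union of these finitely many subspaces, the global infimum of $f$ over $\Sigma$ equals the smallest of these finitely many minimal values and is therefore attained; this establishes the existence of a global minimizer.

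For finiteness and local uniqueness I would start from the optimality conditions \eqref{l1-KKT}, which every local minimizer $\bb^*$ satisfies by \cref{theo01-suff-necc}. The key observation is that for any completion $T_j\supseteq\Gamma(\bb^*_j)$ with $|T_j|=s_j$ one has $(\nabla_j f(\bb^*))_{T_j}=0$: this is immediate from \eqref{l1-KKT} both when $\|\bb^*_j\|_0=s_j$ (where $T_j=\Gamma(\bb^*_j)$) and when $\|\bb^*_j\|_0<s_j$ (where $\nabla_j f(\bb^*)=0$). Hence $\bb^*$ is a critical point, and so the unique minimizer, of the strongly convex function $f|_{\R^{p_1}_{T_1}\times\R^{p_2}_{T_2}}$; since there are only finitely many support patterns, there are at most $\binom{p_1}{s_1}\binom{p_2}{s_2}$ local minimizers. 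To upgrade this to strict local optimality, I would set $\delta:=\min_{j=1,2}\min_{i\in\Gamma(\bb^*_j)}|(\bb^*_j)_i|$, exactly as in the sufficiency part of \cref{theo01-suff-necc}. For $\bb\in N(\bb^*,\delta)\cap\Sigma$ the nonzero entries of $\bb^*$ persist, so $\Gamma(\bb^*_j)\subseteq\Gamma(\bb_j)$ and the difference $\bd:=\bb-\bb^*$ obeys $\|\bd_1\|_0\le s_1,\ \|\bd_2\|_0\le s_2$. Restricted strong convexity then yields $f(\bb)\ge f(\bb^*)+\langle\nabla f(\bb^*),\bd\rangle+(l_f/2)\|\bd\|^2$, while \eqref{l1-KKT} forces $\langle\nabla_j f(\bb^*),\bd_j\rangle=0$ for $j=1,2$ (because either $\bd_j$ is supported on $\Gamma(\bb^*_j)$ where the gradient vanishes, or the whole block gradient vanishes). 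Therefore $f(\bb)\ge f(\bb^*)+(l_f/2)\|\bb-\bb^*\|^2>f(\bb^*)$ for $\bb\neq\bb^*$, i.e.\ $\bb^*$ is the unique minimizer in $N(\bb^*,\delta)\cap\Sigma$.

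The main obstacle I anticipate is the bookkeeping of supports in the last step: restricted strong convexity is only available for directions respecting the blockwise sparsity budget, so one must verify carefully that, within a sufficiently small ball, $\bb-\bb^*$ does not acquire more than $s_j$ nonzeros in block $j$. This is exactly what the choice of $\delta$ controls, through the persistence of the nonzeros of $\bb^*$ together with the constraint $\|\bb_j\|_0\le s_j$. A secondary point of care is that the restricted strong convexity in \cref{Pro1} and \cref{Pro2} is stated in Hessian form, so the passage to the function-value inequality $f(\bb)\ge f(\bb^*)+\langle\nabla f(\bb^*),\bd\rangle+(l_f/2)\|\bd\|^2$ should be justified by a first-order Taylor expansion whose intermediate point shares the support of $\bb$ and hence still admits the quadratic lower bound.
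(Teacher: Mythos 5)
Your proposal is correct and follows essentially the same route as the paper: existence via the decomposition of $\Sigma$ into the finitely many subspaces $\R^{p_1}_{T_1}\times\R^{p_2}_{T_2}$ on which $s$-restricted strong convexity makes the inner problems strongly convex, and local uniqueness via the choice $\delta=\min_j\min_{i\in\Gamma(\bb^*_j)}|(\bb^*_j)_i|$, support persistence, and the quadratic growth bound $f(\bb)\ge f(\bb^*)+(l_f/2)\|\bb-\bb^*\|^2$. Your extra step making the finiteness of local minimizers explicit through \eqref{l1-KKT} (each local minimizer is the unique critical point of $f$ restricted to a completed support) is a valid elaboration of what the paper leaves implicit.
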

\begin{proof} Based on our notation $\mathbb{R}^{p_j}_{T_j}=\{\bt_j\in\mathbb{R}^{p_j}: \Gamma(\bt_j)\subseteq T_j\}$ , we note that $\bt_j\in \mathbb{R}^{p_j}_{T_j}$ implies  $|\Gamma(\bt_j)|\leq |T_j|, j=1,2$. Therefore,  original problem (\ref{005})  is equivalent to
  \begin{eqnarray*}
 		\begin{array}{cl}
 		\min\limits_{\bt_{1},\bt_2} &  f(\bt_{1},\bt_2) \\
 		{\rm s.t.}&  \bt_{1} \in \mathbb{R}^{p_1}_{T_1},~~ \forall ~|T_1| = s_{1}, \\
 	& \bt_{2} \in 	\mathbb{R}^{p_2}_{T_2},~~ \forall ~|T_2| = s_{2}.  
  	\end{array} 
 \end{eqnarray*}
This problem is clearly equivalent to  
	\begin{eqnarray}\label{005-1}
		\begin{aligned}
			\min_{\mid T_1\mid =s_{1},\mid T_2\mid =s_{2}}\left\{\min_{\bt_{1},\bt_{2}}~ f(\bt_{1},\bt_{2}),~~
			{\rm s.t.}~ \bt_{1}\in \mathbb{R}^{p_{1}}_{T_1},~\bt_{2}\in \mathbb{R}^{p_{2}}_{T_2}\right\}.
		\end{aligned}
	\end{eqnarray}
	If matrix $[X~Z]$ is $s$-regular, then $f$ is $s$-restricted strongly convex on $\Sigma^{p_1+p_2}_{s} =\Sigma^{p_1+p_2}_{s_1+s_2} $ by Proposition \ref{Pro1} or Proposition \ref{Pro2}, and hence it is $s$-restricted strongly convex on
	$\mathbb{R}^{p_{1}}_{T_1}\times \mathbb{R}^{p_{2}}_{T_2}$ due to $\mathbb{R}^{p_{1}}_{T_1}\times \mathbb{R}^{p_{2}}_{T_2} \subseteq \Sigma^{p_1+p_2}_{s_1+s_2}$. 
	
 It follows from \cite[Lemma 6]{nesterov2009primal} that the inner program admits a unique global minimizer denoted by  $(\bt_{1}^*(T_1),\bt_{2}^*(T_2))$. Note that $T_1\subseteq[p_1]$ and $T_2\subseteq[p_2]$. Thus there are finitely many  $T_1$ and $T_2$ such that $\mid T_1\mid=s_{1}$ and  $\mid T_2\mid=s_{2}$, and so are the inner programs. This indicates that  $(\bt_{1}^*(T_1),\bt_{2}^*(T_2))$  is finitely many. To derive the global minimizer of \eqref{005-1}, we only pick one $(\bt_{1}^*(T_1),\bt_{2}^*(T_2))$ that makes the objective function value of \eqref{005-1} minimal. Therefore, the global minimizers exist.
	
	We next show that any local minimizer $\bt^*$ is unique. To proceed with that, denote   $\delta:=\min\{\delta_1,\delta_2\}$ where 
	\begin{eqnarray*}
		\arraycolsep=1.4pt\def\arraystretch{2}
		\begin{array}{cll}
			\delta_j&:=& 
			\begin{cases}
				+\infty, & \bt_{j}^* =0,\\
				\min_{i\in \Gamma(\bt_{j}^*)}   \mid(\bt_{j}^*)_i\mid,  & \bt_{j}^* \neq0,\
			\end{cases}~~~~j=1,2. 
		\end{array}
	\end{eqnarray*}
	Clearly,  $\delta_1, \delta_2>0$ and hence $\delta>0$. 
	Then, similar reasoning allows us to derive \eqref{fact-1} for any $\bt\in \Sigma \cap  N(\bt^*,\delta)$. This and  $f$ being $s$-restricted strongly convex lead to
	\begin{eqnarray}\label{convex-f-unique}
		f(\bt)\geq f(\bt^{*}) + (l_f/2) \|\bt -\bt^{*}\|^2.\nonumber
	\end{eqnarray}
	The above condition indicates $\bt^{*}$ is the unique global minimizer of  problem $\min \{f(\bt):\bt\in \Sigma \cap  N(\bt^*,\delta)\}$, namely, $\bt^{*}$ is the unique local minimizer of problem \eqref{005}.  {The proof is complete.\qed}
\end{proof}

\section{Gradient Projection Newton   Algorithm} \label{sec:gra}

In this section, we propose the gradient projection Newton algorithm (GPNA) for  problem  (\ref{005}). Again, for notational simplicity, we define some notations
\begin{eqnarray}\label{notation-k}
	\arraycolsep=1.4pt\def\arraystretch{1.5}
	\begin{array}{lllllllll}
		\bu^k&:=&(\bu_1^k;\bu_2^k), &~~~~ \bt^k&:=&(\bt_1^k;\bt_2^k), \\
		\Gamma_k&:=&\Gamma(\bu^k),&~~~~H^k&:=&\nabla^{2} f(\bu^k),\\
		\bt^k(\alpha)&:=& (\bt_1^k(\alpha);\bt_2^k(\alpha)),&
		~~~~\bt_j^k(\alpha)&\in&  \Pi_{\Sigma_j}(\bt_{j}^{k}-\alpha \nabla_j f(\bt^k)),~ j=1,2.\nonumber
	\end{array}
\end{eqnarray}
Based on the notation in \eqref{notation-1}, we actually have
\begin{eqnarray}\label{notation-k-1}
	\bt^k(\alpha) \in  \Pi_{\Sigma}(\bt^{k}-\alpha \nabla f(\bt^k)).
\end{eqnarray}

The algorithmic framework of GPNA  summarized in Algorithm \ref{algorithm 1} consists of two major components. The first one is based on the two projected gradient steps, which enforces two variables to satisfy the sparsity constraints. The second part adopts a Newton step to speed up the convergence. However, the Newton step is only performed when one of the following conditions is satisfied, 
 {\begin{eqnarray} \label{Newton-switch-on}
	\arraycolsep=1.4pt\def\arraystretch{1.5}
	\begin{array}{llll}
		\text{Condition 1)}~ &  \Gamma(\bt_1^k)=\Gamma(\bu^k_1),&\Gamma(\bt_2^k)=\Gamma(\bu^k_2),\\
		\text{Condition 2)}~&    \|\nabla_1 f(\bu^k)\|<\epsilon,~&\Gamma(\bt_2^k)=\Gamma(\bu^k_2),\\
		\text{Condition 3)}~&   \|\nabla_2 f(\bu^k)\|<\epsilon,~&\Gamma(\bt_1^k)=\Gamma(\bu^k_1),\\
		\text{Condition 4)}~&   \|\nabla_1 f(\bu^k)\|<\epsilon ,& \|\nabla_2 f(\bu^k)\|<\epsilon,
	\end{array}
\end{eqnarray}}
where $\epsilon>0$ is a given tolerance. 

\begin{algorithm}[!th]
	\caption{GPNA: Gradient Projection Newton Algorithm \label{algorithm 1}}
	\begin{algorithmic}[1]
		\REQUIRE Initialize $\bt^{0}$. Let $ 0< \sigma, 0< \epsilon, 0<\alpha_{0} \leq1, 0<\gamma<1, 0 <\varepsilon < {\tt tol}_0$ and set $k\Leftarrow0$.
		\WHILE{${\tt tol}_k > \varepsilon$}
		\STATE \underline{Gradient projection:} Find the smallest integer $q_k = 0, 1, \cdots$ such that
		\STATE \parbox{.92\textwidth}{\begin{eqnarray}\label{armijio-descent-property} f(\bt^k(\alpha_{0}\gamma^{q_{k}}))\leq f(\bt^{k})-({\sigma}/{2})\|\bt^k(\alpha_{0}\gamma^{q_{k}})-\bt^{k}\|^{2}.\nonumber\end{eqnarray}}
		\STATE Set $\alpha_k= \alpha_{0}\gamma^{q_{k}}$,  $ \bu^k=\bt^k(\alpha_k)$ and $\bt^{k+1}=\bu^{k}$.
		\IF{one of the conditions in (\ref{Newton-switch-on}) is satisfied}
		\STATE\underline{Newton step:}
		If the following equations are solvable
		\STATE \parbox{.88\textwidth}{\begin{eqnarray}\label{Newton-descent-property}
				H^k_{\Gamma_{k}\Gamma_{k}} (\bv^{k}_{\Gamma_k}-\bu^{k}_{\Gamma_k})=-(\nabla f(\bu^{k}))_{\Gamma_k}, ~~\bv^{k}_{\overline\Gamma_k}=0,
		\end{eqnarray}} 
		\STATE and the solution $\bv^k$ satisfies
		\STATE \parbox{.88\textwidth}{\begin{eqnarray}\label{Newton-descent-property-1}
				f(\bv^k)\leq f(\bu^k) - ({\sigma}/{2})\|\bv^k-\bu^k\|^2,
		\end{eqnarray}}
		\STATE then set $\bt^{k+1}=\bv^{k}$.			
		\ENDIF
		\STATE	Compute ${\tt tol}_k:=\|(\nabla f(\bt^{k+1}))_{\Gamma_k}\|$ and set $k:= k+1$.
		\ENDWHILE
		\STATE	Output the solution $\bt^k.$
	\end{algorithmic}
\end{algorithm}

\begin{remark}We have some comments on the halting condition and computational complexity for GPNA in Algorithm \ref{algorithm 1}.
	\begin{itemize}
		\item One can discern that if $\bt^{k+1}=\bu^k$,  
		then $ \bt^{k+1}_{\overline\Gamma_k}=0$.  If $\bt^{k+1}=\bv^k$, then the updating rule \eqref{Newton-descent-property} for $\bv^k$ indicates that
		\begin{eqnarray} \label{Gv-Gu}
			\Gamma(\bv^k)\subseteq\Gamma_k =\Gamma(\bu^k),
		\end{eqnarray}
		which also implies $ \bt^{k+1}_{\overline\Gamma_k}=0$. Now suppose   ${\tt tol}_k =0$, i.e., $(\nabla f(\bt^{k+1}))_{\Gamma_k}=0$.  Then $\bt^{k+1}$ satisfies (\ref{l1-KKT}) and thus is a local minimizer of problem (\ref{005}). Therefore, it makes sense to terminate the algorithm when ${\tt tol}_k < \varepsilon$.

		\item We note that the calculations of $\Pi_{\Sigma_1}, \Pi_{\Sigma_2}$ and gradient   $\nabla f$  dominate the computation for the gradient projection step. And these three terms are easy to calculate and their total computational complexity is about $O(n(p_1+p_2))$. For the Newton step, if matrix $[X~Z]$ is $s$-regular,  then the inverse of $H^{k}_{\Gamma_{k}\Gamma_{k}}$ exists due to $\mid\Gamma_{k}\mid\leq s$, which means that every Newton step is well defined. Moreover, the worst-case computational complexity of deriving $\bv^{k}$ is about $O(s^3+ns^2)$. Overall, the entire computational complexity of the $k$th iteration of Algorithm \ref{algorithm 1} is $O(s^3+ns^2 + q_k n(p_1+p_2))$. We prove that $\alpha_k$ is bounded by upper and lower bounds. If we know the strong smooth parameter $L_{f}$ of the objective function $f$, then $q_k$ may be taken as 1 or a small positive integer.
		
	\end{itemize}
	
\end{remark}

\subsection{Global convergence}
Before establishing the main convergence results, we define a constant $\underline{\alpha}$  by
\begin{eqnarray}
	\label{lower-bd-alphak}\underline{\alpha}:=\min\left\{1,~ {\gamma}{(\sigma+L_f)^{-1}}\right\},\nonumber
\end{eqnarray}
which is a positive scalar. We first need the following lemma.
\begin{lemma} \label{theo05}
	Let $\{\bt^{k}\}$ be the sequence generated by GPNA. The following statements are true.
	\begin{itemize}
		\item[1)] For any $0<\alpha\leq  1/( \sigma+L_f)$, it holds that
		\begin{eqnarray}
			\label{zk-alpha-zk}f(\bt^k(\alpha))\leq f(\bt^k) - (\sigma/2)\|\bt^k(\alpha)-\bt^k\|^2,
		\end{eqnarray}
		and thus $\inf_{k\geq 0}\{\alpha_k\}\geq \underline{\alpha}>0$.		
		\item[2)] $\{f(\bt^k)\}$ is a non-increasing sequence and $$\lim\limits_{k\rightarrow \infty}\|\bu^{k}-\bt^{k}\|=\lim\limits_{k\rightarrow \infty}\|\bt^{k+1}-\bt^{k}\|=0.$$ 
		\item[3)] Any accumulating point of  sequence  $\{\bt^k\}$ is an $\alpha$-stationary point with $0<\alpha\leq \underline{\alpha}$ of problem  (\ref{005}). 
	\end{itemize}
\end{lemma}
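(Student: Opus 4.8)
The plan is to treat the three parts in order, since each feeds the next.

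For part 1) I would start from the defining property of the projected point $\bb^k(\alpha)\in\Pi_{\Sigma}(\bb^k-\alpha\nabla f(\bb^k))$. Because every iterate $\bb^k$ lies in $\Sigma$ (the gradient step returns a point in $\Sigma$, and by \eqref{Gv-Gu} the Newton step keeps $\Gamma(\bv^k)\subseteq\Gamma_k$, hence $\bv^k\in\Sigma$ as well), $\bb^k$ is an admissible competitor in the projection, so $\|\bb^k(\alpha)-\bb^k+\alpha\nabla f(\bb^k)\|^2\le\|\alpha\nabla f(\bb^k)\|^2$. Expanding gives $\langle\nabla f(\bb^k),\bb^k(\alpha)-\bb^k\rangle\le-(1/(2\alpha))\|\bb^k(\alpha)-\bb^k\|^2$. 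Feeding this into the strong-smoothness inequality of \cref{Pro1}/\cref{Pro2}, namely $f(\bb^k(\alpha))\le f(\bb^k)+\langle\nabla f(\bb^k),\bb^k(\alpha)-\bb^k\rangle+(L_f/2)\|\bb^k(\alpha)-\bb^k\|^2$, produces $f(\bb^k(\alpha))\le f(\bb^k)+(1/2)(L_f-1/\alpha)\|\bb^k(\alpha)-\bb^k\|^2$, and $L_f-1/\alpha\le-\sigma$ precisely when $\alpha\le 1/(\sigma+L_f)$, which is \eqref{zk-alpha-zk}. For the step-size lower bound I would use the backtracking rule: whenever a trial step is $\le 1/(\sigma+L_f)$ the Armijo test \eqref{armijio-descent-property} is already met by \eqref{zk-alpha-zk}, so if $q_k\ge1$ the rejected trial $\alpha_0\gamma^{q_k-1}$ must exceed $1/(\sigma+L_f)$, giving $\alpha_k>\gamma/(\sigma+L_f)$; combined with $\alpha_k\le\alpha_0\le1$ this yields the uniform bound $\alpha_k\ge\underline\alpha$.

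For part 2) monotonicity is immediate: the gradient step gives $f(\bu^k)\le f(\bb^k)-(\sigma/2)\|\bu^k-\bb^k\|^2$ by \eqref{armijio-descent-property}, and the Newton step, when taken, further decreases $f$ by \eqref{Newton-descent-property-1}, so in all cases $f(\bb^{k+1})\le f(\bb^k)$. Since $f\ge0$ is bounded below, $\{f(\bb^k)\}$ converges and the consecutive gaps $f(\bb^k)-f(\bb^{k+1})\to0$. Adding the two descent inequalities gives $f(\bb^k)-f(\bb^{k+1})\ge(\sigma/2)(\|\bu^k-\bb^k\|^2+\|\bv^k-\bu^k\|^2)$ in the Newton case (and just the first term otherwise), so both $\|\bu^k-\bb^k\|\to0$ and, when the Newton step is used, $\|\bv^k-\bu^k\|\to0$; the triangle inequality $\|\bb^{k+1}-\bb^k\|\le\|\bb^{k+1}-\bu^k\|+\|\bu^k-\bb^k\|$ then gives $\|\bb^{k+1}-\bb^k\|\to0$.

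For part 3) let $\bb^*$ be the limit of a subsequence $\bb^{k_t}\to\bb^*$. By part 2), $\bu^{k_t}\to\bb^*$ as well, and since $\alpha_{k_t}\in[\underline\alpha,1]$ is bounded I pass to a further subsequence with $\alpha_{k_t}\to\alpha^*\in[\underline\alpha,1]$. Writing $\bw^{k_t}:=\bb^{k_t}-\alpha_{k_t}\nabla f(\bb^{k_t})$, continuity of $\nabla f$ gives $\bw^{k_t}\to\bw^*:=\bb^*-\alpha^*\nabla f(\bb^*)$, while $\bu^{k_t}\in\Pi_{\Sigma}(\bw^{k_t})$ means $\|\bu^{k_t}-\bw^{k_t}\|\le\|\bu-\bw^{k_t}\|$ for every $\bu\in\Sigma$. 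Letting $t\to\infty$ and using that $\Sigma$ is closed (so $\bb^*\in\Sigma$) yields $\|\bb^*-\bw^*\|\le\|\bu-\bw^*\|$ for all $\bu\in\Sigma$, i.e. $\bb^*\in\Pi_{\Sigma}(\bb^*-\alpha^*\nabla f(\bb^*))$, so $\bb^*$ is an $\alpha^*$-stationary point. Finally I would invoke the explicit characterization \eqref{l1}: decreasing $\alpha$ leaves the equality conditions on $\Gamma(\bb_j^*)$ untouched and only relaxes the inequality on $\overline\Gamma(\bb_j^*)$, so $\alpha^*$-stationarity forces $\alpha$-stationarity for every $0<\alpha\le\alpha^*$; since $\alpha^*\ge\underline\alpha$, $\bb^*$ is $\alpha$-stationary for all $0<\alpha\le\underline\alpha$.

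The main obstacle is the limiting argument in part 3): the projection $\Pi_{\Sigma}$ is set-valued and discontinuous when ties occur in the hard thresholding, so the support of $\bu^{k_t}$ need not converge and one cannot simply ``take limits inside $\Pi_{\Sigma}$''. The clean way around this is exactly the competitor/closedness argument above, which passes the variational inequality defining the projection to the limit without tracking supports, together with the monotonicity of the stationarity conditions in $\alpha$ that converts the limiting value $\alpha^*\ge\underline\alpha$ into the asserted range $0<\alpha\le\underline\alpha$.
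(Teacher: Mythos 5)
Your proposal is correct and follows essentially the same route as the paper: part 1) via the projection competitor inequality plus strong smoothness and the backtracking bound $\alpha_k\ge\gamma/(\sigma+L_f)$, part 2) via the two descent inequalities and boundedness of $f$ from below, and part 3) by passing the variational inequality defining $\Pi_{\Sigma}$ to the limit along a subsequence with $\alpha_{k}\to\alpha_*\in[\underline{\alpha},1]$ and then using the monotonicity of the conditions \eqref{l1} in $\alpha$. The only cosmetic differences are that you take the limit in the projection inequality directly where the paper argues by contradiction, and you use the triangle inequality where the paper uses $\|{\bf a}+{\bf b}\|^2\le 2\|{\bf a}\|^2+2\|{\bf b}\|^2$; neither changes the argument.
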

\begin{proof}1) It follows from \eqref{notation-k-1} that $\bt^k(\alpha)  \in \Pi_{\Sigma }(\bt^k -\alpha \nabla  f(\bt^k))$ and thus
	$$\| \bt ^k(\alpha)-( \bt ^k - \alpha  \nabla   f(\bt^k))\|^2 \leq \|\bt ^k -( \bt ^k - \alpha  \nabla   f(\bt^k))\|^2,$$
	which results in
	\begin{eqnarray} \label{eta-point-not-3}
		2\alpha\langle   \nabla   f(\bt^k),  \bt^k(\alpha)-  \bt^k\rangle \leq -  \| \bt ^k(\alpha)-  \bt^k\|^2.\nonumber
	\end{eqnarray}
	This and the strong smoothness of $f$ with  constant $L_f$ derive that
	\begin{eqnarray*}
		\arraycolsep=1.4pt\def\arraystretch{1.5}
		\begin{array}{lll}
			f(\bt^k(\alpha)) &\leq&    f(\bt^k)+
			\langle  \nabla  f(\bt^k),\bt^k(\alpha)-\bt^k\rangle
			+ (L_{f}/2)\|\bt^k(\alpha)-\bt^k\|^2\\
			& {\leq}&   f(\bt^k)- ( {1}/{(2\alpha)} -(L_{f}/2) )\|\bt^k(\alpha)-\bt^k\|^2\\
			&\leq&  f(\bt^k) -  (\sigma/2)\|\bt^k(\alpha)-\bt^k\|^2,
		\end{array}
	\end{eqnarray*}
	where the last inequality is from $0<\alpha\leq  1/(\sigma+L_{f})$.  {Invoking} the Armijo-type step size rule,  {one has} $\alpha_k\geq \gamma/(\sigma+L_{f})$,  which by  $\alpha_k\leq 1$ proves the desired assertion. 
	
	2) By \eqref{zk-alpha-zk} and $\bu^k=\bt^k(\alpha_k)$, we have
	\begin{eqnarray}\label{u-z-ell}
		f(\bu^k)&\leq&  f(\bt^k) -  (\sigma/2)\|\bu^k-\bt^k\|^2.
	\end{eqnarray}
	By the framework of Algorithm \ref{algorithm 1}, if $\bt^{k+1}=\bu^k$, then the above condition implies,
	\begin{eqnarray}
		\label{fact-u-z}		 f(\bt^{k+1})&\leq&
		f(\bt^k) - (\sigma/2)\|\bt^{k+1}-\bt^k\|^2.\nonumber
	\end{eqnarray}
	If $\bt^{k+1}=\bv^k$, then  we obtain
	\begin{eqnarray}\label{fact-u-z-1}
		\arraycolsep=1.4pt\def\arraystretch{1.5}
		\begin{array}{lll}
			f(\bt^{k+1})= f(\bv^k)&\leq&f(\bu^k) - (\sigma/2)\|\bt^{k+1} -\bu^k\|^2 \\
			&\leq& f(\bt^k) - (\sigma/2)\|\bu^k-\bt^k\|^2- (\sigma/2)\|\bt^{k+1}-\bu^k\|^2\\
			&\leq& f(\bt^k) -(\sigma/4)\|\bt^{k+1}-\bt^k\|^2, \nonumber
		\end{array}
	\end{eqnarray}
	where the second and last inequalities used \eqref{u-z-ell} and a fact $\|{\bf a}+{\bf b}\|^2\leq 2\|{\bf a}\|^2+2\|{\bf b}\|^2$  {for all vectors ${\bf a}$ and ${\bf b}$}. Both cases lead to	 \begin{eqnarray}\label{decrese-pro}
		\arraycolsep=1.4pt\def\arraystretch{1.5}
		\begin{array}{lll}
			f(\bt^{k+1})&\leq& f(\bt^k) - (\sigma/4)\|\bt^{k+1}-\bt^k\|^2,\\  
			f(\bt^{k+1})&\leq& f(\bt^k) -  (\sigma/2)\|\bu^k-\bt^k\|^2. 
		\end{array}
	\end{eqnarray}
	Therefore, $\{f(\bt^k)\}$ is  non-increasing, which by \eqref{decrese-pro} and $f \geq 0$ yields
	\begin{eqnarray*} &&\sum_{k\geq 0} \max\{(\sigma/4)\|\bt^{k+1}-\bt^k\|^2, (\sigma/2)\|\bu^k-\bt^k\|^2\}\\
		&\leq& \sum_{k\geq 0} \left[ f(\bt^k) - f(\bt^{k+1})\right] = f(\bt^0) - \lim_{k\rightarrow\infty} f(\bt^{k+1})\leq f(\bt^0).	
	\end{eqnarray*}
	The above condition suffices to $\lim_{k\rightarrow\infty}\|\bt^{k+1}-\bt^k\|=\lim_{k\rightarrow\infty}\|\bu^k-\bt^k\|=0.$ 
	
	3) Let $\bt^*$ be any accumulating point of $\{\bt^k\}$.  {Then there exists a} subset $M$ of $\{0,1,2,\ldots\}$ such that $\lim_{ k (\in M)\rightarrow\infty} \bt^k = \bt^*.$  {This further implies $\lim_{ k (\in M)\rightarrow\infty} \bu^k = \bt^*$ by applying 2). 
		In addition, as stated in 1), we have $\{\alpha_k\}\subseteq [ \underline{\alpha }, 1]$,
		which indicates that one can find a subsequence $K$ of $M$ and a scalar $\alpha _*\in [ \underline{\alpha }, 1]$ such that $\{\alpha_k:  k \in K\}\rightarrow \alpha _*$. Overall, we have}
	\begin{eqnarray}
		\label{a-a-under} \lim_{ k (\in K)\rightarrow\infty}\bt^k =\lim_{ k (\in K)\rightarrow\infty}\bu^k = \bt^*, ~~~~\lim_{ k (\in K)\rightarrow\infty} \alpha_k =\alpha _* \in [\underline{\alpha },1].\end{eqnarray}
	Let $\be^k :=\bt^k -\alpha_k \nabla  f(\bt^k)$. The framework of Algorithm \ref{algorithm 1} implies
	\begin{eqnarray}\label{u-l-P}
		\bu^k  \in \Pi_{\Sigma}( \be^k ),~~~~ \lim_{ k (\in K)\rightarrow\infty} \be^k =\bt^* -\alpha _*  \nabla f(\bt^*)=:\be^*.\end{eqnarray}
	The first condition means $\bu^k  \in \Sigma$ for any $ k \geq1$. Note that $\Sigma$ is closed and  $\bt^*$ is the accumulating point of $\{\bu^k\}$ by \eqref{a-a-under}. Therefore, $\bt^*\in \Sigma$, which results in
	\begin{eqnarray}
		\label{z-*-g}
		\min_{\bt\in \Sigma}\|\bt -\be^*\| \leq \|\bt^*-{\be}^*\|.\end{eqnarray}
	If `$<$' holds in the above condition, then there is an $ \varepsilon_0>0$ such that
	\begin{eqnarray*}
		\|\bt^*-\be^*\|-\varepsilon_0 &=&  \min_{\bt\in \Sigma}\|\bt -\be^*\| \\
		&\geq&\min_{\bt\in \Sigma} (\|\bt - \be^k \|-\|\be^k- \be^*\|)\\
		&=&\|\bu^k - \be^k \|-\|\be^k -\be^*\|,
	\end{eqnarray*}
	where the last equality is from \eqref{u-l-P}. Taking the limit of both sides of the above condition along $ k (\in K)\rightarrow\infty$ yields $\|\bt^*- \be^*\|-\varepsilon_0 \geq \|\bt^* - \be^*\|$ by \eqref{a-a-under} and \eqref{u-l-P}, a contradiction with  $ \varepsilon_0>0$. Therefore, we must have the equality holds in \eqref{z-*-g}, showing that
	\begin{eqnarray*} \bt^* \in  \Pi_ \Sigma(\be^*)= \Pi_\Sigma\left( \bt^* - \alpha _*   \nabla f(\bt^*)\right). \end{eqnarray*}
	The above relation means the conditions in  \eqref{l1} hold  for $\alpha =\alpha _*$, then these conditions must hold for any $0<\alpha \leq \underline{\alpha }$ due to $\underline{\alpha }\leq \alpha _*$ from \eqref{a-a-under}, namely,
	\begin{eqnarray*} \bt^* \in   \Pi_\Sigma\left( \bt^* - \alpha  \nabla f(\bt^*)\right), \end{eqnarray*}
	displaying that $\bt^*$ is an $\alpha $-stationary point of problem \eqref{005}, as desired.  {The proof is complete.\qed}
\end{proof}
The above lemma allows us to conclude that the whole sequence converges.
\begin{theorem}\label{global-convergence}  Let  $\{\bt^ k \}$ be the sequence generated by GPNA. Then the whole sequence converges to  a unique local minimizer of  (\ref{005}) if $[X~Z]$ is $s$-regular.
\end{theorem}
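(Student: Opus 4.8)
The plan is to combine the three conclusions of \cref{theo05} with the structural result of \cref{theo04}. The argument proceeds in three stages: first establish that $\{\bb^k\}$ is bounded so that accumulating points exist; second, identify every accumulating point as an isolated local minimizer; and third, upgrade ``finitely many isolated accumulating points'' together with $\|\bb^{k+1}-\bb^k\|\to 0$ to convergence of the entire sequence.

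First I would prove boundedness. For every $k\geq 1$ we have $\bb^k\in\Sigma$, since $\bb^k$ equals either $\bu^{k-1}=\bb^{k-1}(\alpha_{k-1})\in\Sigma$ or $\bv^{k-1}$ with $\Gamma(\bv^{k-1})\subseteq\Gamma(\bu^{k-1})$ by \eqref{Gv-Gu}. Because $\{f(\bb^k)\}$ is non-increasing by \cref{theo05}(2), the tail of the sequence stays in the sublevel set $\{\bb\in\Sigma:f(\bb)\leq f(\bb^0)\}$. Writing $\Sigma$ as the finite union of the subspaces $\R^{p_1}_{T_1}\times\R^{p_2}_{T_2}$ over $|T_1|=s_1$ and $|T_2|=s_2$, I would use the $s$-restricted strong convexity guaranteed by the $s$-regularity of $[X~Z]$ (\cref{Pro1} and \cref{Pro2}): on each such subspace $f$ restricts to an ordinary strongly convex function with modulus $l_f>0$, so its sublevel sets are bounded. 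A finite union of bounded sets is bounded, whence $\{\bb^k\}$ is bounded and possesses at least one accumulating point.

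Next, by \cref{theo05}(3) every accumulating point of $\{\bb^k\}$ is an $\alpha$-stationary point, and by \cref{theo01} an $\alpha$-stationary point is a local minimizer of \eqref{005}. By \cref{theo04}, under $s$-regularity the local minimizers of \eqref{005} are finite in number and each is isolated. Consequently the set $\Omega$ of accumulating points of $\{\bb^k\}$ is a finite subset of these isolated local minimizers. Finally I would invoke the standard fact that for a bounded sequence satisfying $\|\bb^{k+1}-\bb^k\|\to 0$ (the latter supplied by \cref{theo05}(2)), the accumulating-point set $\Omega$ is nonempty, compact and connected. Since $\Omega$ is also finite by the previous step, connectedness forces $\Omega$ to be a singleton, say $\{\bb^*\}$, and a bounded sequence with a unique accumulating point converges to it. Thus $\bb^k\to\bb^*$, which is the local minimizer, unique in its neighborhood by \cref{theo04}.

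The main obstacle is the passage from ``finitely many isolated accumulating points'' to ``a single accumulating point''. The delicate tool here is the connectedness of the limit set under $\|\bb^{k+1}-\bb^k\|\to 0$. Alternatively, and perhaps more self-containedly, I would argue directly: choose $\bar\rho>0$ smaller than the minimal gap separating distinct local minimizers, fix an accumulating point $\bb^*$, and note that since $\|\bb^{k+1}-\bb^k\|\to 0$ the steps eventually fall below $\bar\rho/2$; once the sequence enters $N(\bb^*,\bar\rho/2)$ it can never escape $N(\bb^*,\bar\rho)$, for otherwise it would produce an accumulating point in the annulus, contradicting the isolation of the local minimizers. This confines the tail of $\{\bb^k\}$ to a single isolating ball and again yields $\bb^k\to\bb^*$.
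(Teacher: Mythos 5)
Your proof is correct and follows essentially the same route as the paper: boundedness of the iterates on the sublevel set via the $s$-restricted strong convexity coming from $s$-regularity, identification of every accumulation point as an $\alpha$-stationary point and hence an isolated local minimizer through \cref{theo05}, \cref{theo01} and \cref{theo04}, and then the use of $\|\bb^{k+1}-\bb^k\|\to 0$ to force convergence of the whole sequence. The only difference is that where the paper cites \cite[Lemma 4.10]{more1983computing} for the final step (isolated accumulation point plus vanishing step lengths implies convergence), you prove that step explicitly via the connectedness of the accumulation set or the trapping-ball argument, which is a valid self-contained substitute.
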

\begin{proof}  As shown in Lemma \ref{theo05}, $\{\bt^k\}\subseteq \{\bt: f(\bt)\leq f(\bt^0),\bt\in \Sigma \}$ is a bounded set due to $s$-restricted strong convexity of $f$ from the $s$-regularity of $[X~Z]$. Therefore, one can find  a subsequence of $\{\bt^ k \}$ that converges to $\alpha$-stationary point $\bt^*$ with $0<\alpha\leq \underline{\alpha}$ of problem \eqref{005}. 
	Recall that an $\alpha $-stationary point $\bt^*$ is also a local minimizer by Theorem \ref{theo01}, which by Theorem \ref{theo04} indicates that $\bt^*$ is unique if $[X~Z]$ is $s$-regular. In other words, $\bt^*$ is an isolated local minimizer of problem (\ref{005}).  Finally, it follows from $\bt^*$ being isolated, \cite[Lemma 4.10]{more1983computing} and $\lim_{ k \rightarrow\infty}\|\bt^{ k +1}-\bt^ k \|=0$ by Lemma \ref{theo05} that
	the whole sequence converges to the unique local minimizer, $\bt^*$.  {The proof is complete.\qed} 
\end{proof}
\subsection{Convergence rate}
This part aims to establish the convergence rate of GPNA when the sequence falls into a local area of its limiting point. Before the main result, we claim the following facts. 
\begin{lemma}\label{quadratic-lemma}  Suppose $[X~Z]$ is $s$-regular. Let  $\{\bt^ k \}$ be the sequence generated by GPNA and $\bt^*$ be its limit.  The following results hold for sufficiently large $k$.
	\begin{itemize}
		\item[1)] The support set of $\bt^*$ can be identified by
		\begin{eqnarray}\label{support-identify}
			\arraycolsep=1.4pt\def\arraystretch{1.5}
			\Gamma(\bt_j^*)\left\{\begin{array}{lll}
				\subseteq (\Gamma(\bt_j^k) \cap\Gamma(\bu_j^k)),&~~\text{if}~~ & \|\bt_j^*\|_0<s_j,\\
				\equiv \Gamma(\bt_j^k)\equiv \Gamma(\bu_j^k),&~~\text{if}~~& \|\bt_j^*\|_0=s_j,
			\end{array}\right.~~~~j=1,2.
		\end{eqnarray}
		\item[2)] The Newton step is always admitted if we set $\sigma\in(0,l_f/2)$.
	\end{itemize}
\end{lemma}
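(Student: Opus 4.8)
The plan is to combine the convergence facts $\bb^k\to\bb^*$ and $\bu^k\to\bb^*$ (the latter via \cref{theo05} item 2) together with \cref{global-convergence}) with the optimality conditions satisfied by the limit $\bb^*$. For 1), I would use a persistence-of-support argument. For each block $j$ with $\bb_j^*\neq0$, set $\delta_j:=\min_{i\in\Gamma(\bb_j^*)}|(\bb_j^*)_i|>0$. Since $\bb_j^k\to\bb_j^*$ and $\bu_j^k\to\bb_j^*$, for all large $k$ both $\|\bb_j^k-\bb_j^*\|<\delta_j$ and $\|\bu_j^k-\bb_j^*\|<\delta_j$; the reverse-triangle estimate already used in \cref{theo01-suff-necc} then keeps every coordinate $i\in\Gamma(\bb_j^*)$ nonzero in both $\bb_j^k$ and $\bu_j^k$, giving $\Gamma(\bb_j^*)\subseteq\Gamma(\bb_j^k)\cap\Gamma(\bu_j^k)$, which is the deficient case $\|\bb_j^*\|_0<s_j$. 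When $\|\bb_j^*\|_0=s_j$, the cardinality constraints $\|\bb_j^k\|_0\le s_j$ and $\|\bu_j^k\|_0\le s_j$ force these inclusions to be equalities, yielding $\Gamma(\bb_j^*)\equiv\Gamma(\bb_j^k)\equiv\Gamma(\bu_j^k)$.

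For 2), I would verify the three requirements for the Newton step to be admitted. \emph{Switch-on:} by 1), any block with $\|\bb_j^*\|_0=s_j$ satisfies $\Gamma(\bb_j^k)=\Gamma(\bu_j^k)$ for large $k$, while any block with $\|\bb_j^*\|_0<s_j$ has $\nabla_j f(\bb^*)=0$ (from the $\alpha$-stationarity of $\bb^*$), so continuity of $\nabla f$ and $\bu^k\to\bb^*$ give $\|\nabla_j f(\bu^k)\|<\epsilon$ eventually; examining the four combinations of block types shows one of Cond~1)--4) in \eqref{Newton-switch-on} holds for all large $k$. \emph{Solvability:} since $[X~Z]$ is $s$-regular and $|\Gamma_k|\le s$, \cref{Pro1} and \cref{Pro2} give $H^k_{\Gamma_k\Gamma_k}\succeq (l_f/2)I\succ0$, so \eqref{Newton-descent-property} has a unique solution $\bv^k$.

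\emph{Descent.} Write $\bd^k:=\bv^k-\bu^k$, which is supported on $\Gamma_k$ since $\bu^k_{\overline\Gamma_k}=0=\bv^k_{\overline\Gamma_k}$. The Newton equation \eqref{Newton-descent-property} reads $H^k_{\Gamma_k\Gamma_k}\bd^k_{\Gamma_k}=-(\nabla f(\bu^k))_{\Gamma_k}$, whence $\langle\nabla f(\bu^k),\bd^k\rangle=-\langle\bd^k,H^k\bd^k\rangle$. A second-order Taylor expansion of $f$ at $\bu^k$ (Lagrange form), with some $\xi^k$ on the segment $[\bu^k,\bv^k]$, gives
\[
f(\bv^k)=f(\bu^k)-\frac{1}{2}\langle\bd^k,H^k\bd^k\rangle+\frac{1}{2}\big\langle\bd^k,(\nabla^2 f(\xi^k)-H^k)\bd^k\big\rangle.
\]
Bounding the last term by the Hessian-Lipschitz inequality \eqref{Lip-hassian} as $(C_f/2)\|\bd^k\|^3$ and using restricted strong convexity $\langle\bd^k,H^k\bd^k\rangle\ge (l_f/2)\|\bd^k\|^2$, I obtain
\[
f(\bv^k)\le f(\bu^k)-\Big(\frac{l_f}{4}-\frac{C_f}{2}\|\bd^k\|\Big)\|\bd^k\|^2.
\]
It then remains to show $\bd^k\to0$, so that the bracket tends to $l_f/4$ and, since $\sigma<l_f/2$ gives $\sigma/2<l_f/4$, exceeds $\sigma/2$ for all large $k$; this is exactly \eqref{Newton-descent-property-1}. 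The vanishing $\bd^k\to0$ follows from $\|\bd^k\|\le (2/l_f)\|(\nabla f(\bu^k))_{\Gamma_k}\|$ together with $(\nabla f(\bu^k))_{\Gamma_k}\to0$: by the optimality conditions \eqref{l1-KKT} at $\bb^*$, $\nabla_j f(\bb^*)$ vanishes on $\Gamma(\bb_j^*)$ when $\|\bb_j^*\|_0=s_j$ and everywhere when $\|\bb_j^*\|_0<s_j$, and by 1) the stabilized $\Gamma_k$ lies in the zero set of $\nabla f(\bb^*)$, so continuity of $\nabla f$ finishes the argument.

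The main obstacle is the descent step. One must (i) establish $\bd^k\to0$ through the subtle point that it is the \emph{restricted} gradient $(\nabla f(\bu^k))_{\Gamma_k}$---not the full gradient, which need not vanish---that tends to zero, a fact resting entirely on the support identification in 1) and the limit's optimality conditions; and (ii) absorb the cubic Hessian-Lipschitz remainder into the quadratic strong-convexity term, which is precisely what dictates the admissible threshold $\sigma<l_f/2$. Everything downstream, in particular the quadratic rate in the next result, will hinge on this step.
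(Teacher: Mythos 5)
Your proposal is correct and follows essentially the same route as the paper: support identification via convergence plus the cardinality bounds, the four-case analysis to trigger one of Cond~1)--4), $s$-regularity for solvability of \eqref{Newton-descent-property}, and a Taylor expansion combined with $(\nabla f(\bb^*))_{\Gamma_k}=0$ and restricted strong convexity to get $\bv^k-\bu^k\to0$ and the descent inequality under $\sigma\in(0,l_f/2)$. The only cosmetic difference is that you control the Taylor remainder with the Hessian-Lipschitz constant $C_f$ (exact for $\ell_{lin}$ since the Hessian is constant), whereas the paper uses a generic $o(\|\bv^k-\bu^k\|^2)$ term.
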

\begin{proof}1) If $\|\bt_j^*\|_0=s_j$, then by $\bt_j^k\rightarrow \bt_j^*, \bu_j^k\rightarrow \bt_j^*$ and $\|\bt_j^k\|_0\leq s_j,\|\bu_j^k\|_0\leq s_j$, we must have $\Gamma(\bt_j^*) 
	\equiv \Gamma(\bt_j^k)\equiv \Gamma(\bu_j^k)$ for sufficiently large $k$. If $\|\bt_j^*\|_0<s_j$, similar reasoning allows for deriving $\Gamma(\bt_j^*) 
	\subseteq   \Gamma(\bt_j^k) $ and $ \Gamma(\bt_j^*) 
	\subseteq \Gamma(\bu_j^k)$.
	
	2) By Theorem \ref{global-convergence}, the limiting point, $\bt^*$, is a local minimizer of  problem (\ref{005}). Therefore, it satisfies \eqref{l1-KKT} from Theorem \ref{theo01-suff-necc}. We first conclude that for sufficiently large $k$, one of the four conditions in \eqref{Newton-switch-on} must be satisfied. In fact, there are four cases for $\bt^*$ and each case can imply one   condition in \eqref{Newton-switch-on}  as follows: 
	 {\begin{eqnarray} 
		\arraycolsep=1.4pt\def\arraystretch{1.5}
		\begin{array}{lll}
			\text{Case 1)}&~\|\bt_1^*\|_0=s_1,~\|\bt_2^*\|_0=s_2~~~~\Longrightarrow~~~~\text{Condition 1)},\\
			\text{Case 2)}&~\|\bt_1^*\|_0<s_1,~\|\bt_2^*\|_0=s_2~~~~\Longrightarrow~~~~\text{Condition 2)},\\
			\text{Case 3)}&~\|\bt_1^*\|_0=s_1,~\|\bt_2^*\|_0<s_2~~~~\Longrightarrow~~~~\text{Condition 3)},\\
			\text{Case 4)}&~\|\bt_1^*\|_0<s_1,~\|\bt_2^*\|_0<s_2~~~~\Longrightarrow~~~~\text{Condition 4)}.\\\nonumber
		\end{array}
	\end{eqnarray}}
	We now show them one by one. The Lipschitz continuity of $\nabla f$ indicates that
	\begin{eqnarray} \label{Lip-g}
		\arraycolsep=1.4pt\def\arraystretch{1.5}
		\begin{array}{llll}
			&&\max\{\|\nabla_j f(\bu^k)- \nabla_j f(\bt^*)\|,\|(\nabla f(\bu^{k}))_{\Gamma_k}-(\nabla f(\bt^*))_{\Gamma_k}\}\\
			&\leq &\|\nabla f(\bu^k)- \nabla f(\bt^*)\| \leq L_f\| \bu^k - \bt^* \|.
		\end{array}
	\end{eqnarray}
	The relation of Case 1) $\Rightarrow$ Condition 1) can be derived by \eqref{support-identify} immediately. For Case 2), we have $\Gamma(\bt_2^k)\equiv \Gamma(\bu_2^k)$ by \eqref{support-identify} and 
	\begin{eqnarray} 
		\arraycolsep=1.4pt\def\arraystretch{1.5}
		\begin{array}{llll}
			\|\nabla_1 f(\bu^k)\| &=& \|\nabla_1 f(\bu^k)- \nabla_1 f(\bt^*)\|&~~(\text{by \eqref{l1-KKT}})\\
			&\leq &L_f\| \bu^k - \bt^* \| & ~~(\text{by \eqref{Lip-g}})\\
			&\leq& \epsilon.& ~~(\text{by $ \bu^k \rightarrow \bt^*$}) \nonumber
		\end{array}
	\end{eqnarray}
	Therefore, Case 2) $\Rightarrow$ Condition 2). Similarly, we can show the last two relations. 
	
	Next, since $[X~Z]$ is $s$-regular, 
	$H^k_{\Gamma_{k}\Gamma_{k}}$ is non-singular, which means that the equations \eqref{Newton-descent-property} are solvable. Finally, we show the inequality \eqref{Newton-descent-property-1} is true when $\sigma\in(0,l_f/2)$. In fact, the conditions \eqref{support-identify} and \eqref{l1-KKT} enable to derive  
	\begin{eqnarray}\label{b*-gamma}
		(\nabla f(\bt^*))_{\Gamma_k}=0,
	\end{eqnarray}	
	for sufficiently large $k$. 
	Then it follows from 	 \eqref{Newton-descent-property} that 
	\begin{eqnarray*} 
		\arraycolsep=1.4pt\def\arraystretch{1.5}
		\begin{array}{llll}
			\|\bv^k-\bu^k\| &=& \|\bv^{k}_{\Gamma_k}-\bu^{k}_{\Gamma_k}\|&~~(\text{by \eqref{Gv-Gu}})\\ 
			&=& \|(H^k_{\Gamma_{k}\Gamma_{k}})^{-1}(\nabla f(\bu^{k}))_{\Gamma_k}\|&~~(\text{by   \eqref{Newton-descent-property}})\\
			&\leq&(1/l_f)\|(\nabla f(\bu^{k}))_{\Gamma_k}\|&~~(\text{by \eqref{s-regular-1} or \eqref{s-regular}})\\
			&=&(1/l_f)\|(\nabla f(\bu^{k}))_{\Gamma_k}-(\nabla f(\bt^*))_{\Gamma_k}\|&~~(\text{by   \eqref{b*-gamma}})\\
			&\leq&(L_f/l_f) \|\bu^{k}-\bt^*\|\rightarrow0.& ~~(\text{by  \eqref{Lip-g}})  
		\end{array}
	\end{eqnarray*}
	The above condition indicates that $\|\bv^k-\bu^k\|\rightarrow 0$, resulting in
	\begin{eqnarray}\label{vk-uk}
		o(\|\bv^k-\bu^k\|^2) \leq (l_f/{4}) \|\bv^k-\bu^k \|^2, 
	\end{eqnarray}	
	for sufficiently large $k$.	 Now,  we have the following chain of inequalities,
	\allowdisplaybreaks
	\begin{eqnarray*}
		\arraycolsep=1.4pt\def\arraystretch{1.5}
		\begin{array}{llll}
			2f(\bv^k)-2f(\bu^k)&=& 2\langle\nabla f(\bu^k),\bv^k-\bu^k \rangle+2o(\|\bv^k-\bu^k\|^2)\\
			&+& \langle \nabla^2f(\bu^k)(\bv^k-\bu^k),\bv^k-\bu^k \rangle~~~~(\text{by Taylor expansion})\\
			&=&  2\langle (\nabla f(\bu^k))_{\Gamma_k},(\bv^k-\bu^k)_{\Gamma_k} \rangle+2o(\|\bv^k-\bu^k\|^2)\\
			&+&  \langle H^k_{\Gamma_{k}\Gamma_{k}}(\bv^k-\bu^k)_{\Gamma_k},(\bv^k-\bu^k)_{\Gamma_k} \rangle ~~~~ (\text{by \eqref{Gv-Gu}})\\
			&=& - \langle H^k_{\Gamma_{k}\Gamma_{k}}(\bv^k-\bu^k)_{\Gamma_k},(\bv^k-\bu^k)_{\Gamma_k} \rangle + 2o(\|\bv^k-\bu^k\|^2) ~~~~(\text{by \eqref{Newton-descent-property}})\\
			&\leq& - l_f  \|(\bv^k-\bu^k)_{\Gamma_k} \|^2 +o(\|\bv^k-\bu^k\|^2) ~~~~(\text{by \eqref{s-regular-1} or \eqref{s-regular}})\\
			&=& - l_f \|\bv^k-\bu^k \|^2 + 2 o(\|\bv^k-\bu^k\|^2)~~~~(\text{by \eqref{Gv-Gu}})\\
			&\leq&  -(l_f/{2}) \|\bv^k-\bu^k \|^2~~~~(\text{by   \eqref{vk-uk}}) \\
			&\leq& -\sigma \|\bv^k-\bu^k \|^2.~~~~(\text{by   $\sigma\in(0,l_f/2)$})
		\end{array}
	\end{eqnarray*}
	Overall, the Newton step is always admitted  for sufficiently large $k$.  {The proof is complete.\qed}
\end{proof}
Finally, we conclude that GPNA can converge quadratically for  SLCoRe and terminate within finite steps for SCoRe by the following theorem.
\begin{theorem} \label{theo07} Suppose $[X~Z]$ is $s$-regular. Then the sequence generated by GPNA with $\sigma\in(0,l_f/2)$ eventually converges to its limit quadratically for SLCoRe or within finitely many steps for SCoRe, namely, for sufficiently large $k$,
	\begin{eqnarray*}\label{4.9}
		\arraycolsep=1.4pt\def\arraystretch{1.5}
		\begin{array}{llll}
			\|\bt^{k+1}-\bt^{*}\| \leq  \frac{(1+L_f)^2C_f}{2l_f}\|\bt^{k}-\bt^*\|^{2},&~~\text{if}&~~ \ell=\ell_{log},\\
			\bt^{k+1}=\bt^{*},&~~\text{if}&~~ \ell=\ell_{lin}.
		\end{array}
	\end{eqnarray*} 
\end{theorem}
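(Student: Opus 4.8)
The plan is to combine a standard Newton-type error analysis on the active support with a Lipschitz bound relating the gradient-projection iterate $\bu^k$ to $\bb^k$, and then to specialize the resulting estimate to the two losses. Throughout I would work with $k$ large enough that \cref{quadratic-lemma} applies, so that the Newton step is always admitted (hence $\bb^{k+1}=\bv^k$), the supports are identified via \eqref{support-identify} (in particular $\Gamma(\bb^*)\subseteq\Gamma_k$), and $(\nabla f(\bb^*))_{\Gamma_k}=0$ as in \eqref{b*-gamma}. Since both $\bu^k$ and $\bb^*$ are then supported in $\Gamma_k$, every vector appearing below restricts cleanly to $\Gamma_k$, which is what makes the argument go through.

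First I would establish the Newton contraction $\|\bb^{k+1}-\bb^*\|\le (C_f/(2l_f))\|\bu^k-\bb^*\|^2$. Writing the Newton update \eqref{Newton-descent-property} as $\bv^k_{\Gamma_k}=\bu^k_{\Gamma_k}-(H^k_{\Gamma_k\Gamma_k})^{-1}(\nabla f(\bu^k))_{\Gamma_k}$ and using $(\nabla f(\bb^*))_{\Gamma_k}=0$, I would represent the gradient difference by the integral $(\nabla f(\bu^k)-\nabla f(\bb^*))_{\Gamma_k}=\int_0^1[\nabla^2 f(\bb^*+t(\bu^k-\bb^*))]_{\Gamma_k\Gamma_k}(\bu^k-\bb^*)_{\Gamma_k}\,dt$, which is legitimate because $\bu^k-\bb^*$ is supported in $\Gamma_k$. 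Subtracting $\bb^*_{\Gamma_k}$ and factoring out $(H^k_{\Gamma_k\Gamma_k})^{-1}$ turns the error into $(H^k_{\Gamma_k\Gamma_k})^{-1}\int_0^1[H^k-\nabla^2 f(\bb^*+t(\bu^k-\bb^*))]_{\Gamma_k\Gamma_k}(\bu^k-\bb^*)_{\Gamma_k}\,dt$. Bounding the bracket by the Hessian Lipschitz continuity \eqref{Lip-hassian} (the displacement at parameter $t$ is $(1-t)\|\bu^k-\bb^*\|$, and $\int_0^1(1-t)\,dt=1/2$) together with $\|(H^k_{\Gamma_k\Gamma_k})^{-1}\|\le 1/l_f$ from restricted strong convexity (cf.\ \eqref{s-regular-1} or \eqref{s-regular}, valid since $|\Gamma_k|\le s$) yields the claimed quadratic bound.

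Next I would bound $\|\bu^k-\bb^*\|$ by $\|\bb^k-\bb^*\|$. The key observation is that the block projection preserves coordinate values on the chosen support, so $\bu^k_{\Gamma_k}=(\bb^k-\alpha_k\nabla f(\bb^k))_{\Gamma_k}$. Subtracting $\bb^*_{\Gamma_k}$ and again inserting $(\nabla f(\bb^*))_{\Gamma_k}=0$ gives $\bu^k_{\Gamma_k}-\bb^*_{\Gamma_k}=(\bb^k-\bb^*)_{\Gamma_k}-\alpha_k(\nabla f(\bb^k)-\nabla f(\bb^*))_{\Gamma_k}$; taking norms, using the Lipschitz gradient \eqref{Lip-grad}, and $\alpha_k\le 1$ (from \cref{theo05}) gives $\|\bu^k-\bb^*\|\le (1+L_f)\|\bb^k-\bb^*\|$. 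Chaining this with the Newton bound produces $\|\bb^{k+1}-\bb^*\|\le ((1+L_f)^2 C_f/(2l_f))\|\bb^k-\bb^*\|^2$, the stated rate for SLCoRe.

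Finally, for SCoRe the Hessian is the constant matrix $Q$ of \cref{Pro2}, so the bracket $H^k-\nabla^2 f(\cdot)$ in the Newton error vanishes identically (equivalently $C_f=0$), forcing $\bv^k_{\Gamma_k}=\bb^*_{\Gamma_k}$ and hence $\bb^{k+1}=\bb^*$ once the Newton step is admitted, i.e.\ finite termination. I expect the main obstacle to be the bookkeeping inside the Newton step: one must be sure that restricting the gradient identity and the Hessian to $\Gamma_k$ is justified, which hinges on simultaneously having $\Gamma(\bb^*)\subseteq\Gamma_k$, $\bv^k$ supported in $\Gamma_k$ by \eqref{Gv-Gu}, and $(\nabla f(\bb^*))_{\Gamma_k}=0$, all of which are supplied by \cref{quadratic-lemma} but must be invoked in the right order. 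The remaining estimates are then routine applications of the smoothness, Lipschitz, and restricted-convexity constants already recorded in \cref{Pro1,Pro2}.
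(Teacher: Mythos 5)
Your proposal is correct and follows essentially the same route as the paper's proof: the same two estimates, namely the Newton-step error bound $\|\bb^{k+1}-\bb^*\|\le (C_f/(2l_f))\|\bu^k-\bb^*\|^2$ obtained from \eqref{Newton-descent-property}, \eqref{b*-gamma}, the integral form of $\nabla f(\bu^k)-\nabla f(\bb^*)$, the bound $\|(H^k)^{-1}\|\le 1/l_f$ and \eqref{Lip-hassian}, and the gradient-projection bound $\|\bu^k-\bb^*\|\le(1+L_f)\|\bb^k-\bb^*\|$ from $\alpha_k\le 1$ and \eqref{Lip-grad}, chained together, with the constant-Hessian observation giving finite termination for SCoRe. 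The only difference is the order in which the two estimates are derived, which is immaterial.
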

\begin{proof}We first  estimate $\|\bu^{k}-\bt^*\|$. Recalling \eqref{notation-k-1} that 
	\begin{eqnarray*} 
		\bu^k=\bt^k(\alpha_k) \in  \Pi_{\Sigma}(\bt^{k}- \alpha_k \nabla f(\bt^k)) 
	\end{eqnarray*}
	and $\Gamma_k=\Gamma(\bu^k)$, we have
	\begin{eqnarray*} 
		\bu^k_{\Gamma_k}=\bt^{k}_{\Gamma_{k}}-\alpha_k (\nabla f(\bt^{k}))_{\Gamma_k},~~~~ \bu^k_{\overline \Gamma_k}=0.
	\end{eqnarray*}
	This enables us to deliver that
	\begin{eqnarray} \label{fact-uk-x*}
		\arraycolsep=1.4pt\def\arraystretch{1.5}
		\begin{array}{llll}
			\|\bu^{k}-\bt^*\|  
			&=& \|\bt^{k}_{\Gamma_{k}}-\alpha_k (\nabla f(\bt^{k}))_{\Gamma_k} - \bt^*_{\Gamma_{k}}\| ~~~~(\text{by   $\bu^k_{\overline \Gamma_k}=\bt^*_{\overline \Gamma_k}=0 $ from \eqref{support-identify}})\\
			&=& \|\bt^{k}_{\Gamma_{k}}-\alpha_k (\nabla f(\bt^{k}))_{\Gamma_k} - \bt^*_{\Gamma_{k}}-\alpha_k(\nabla f(\bt^{*}))_{\Gamma_k})\| ~~~~(\text{by   \eqref{b*-gamma}})\\
			&\leq& \|\bt^{k}_{\Gamma_{k}}- \bt^*_{\Gamma_{k}}\|+\alpha_k \|(\nabla f(\bt^{k}))_{\Gamma_k} - (\nabla f(\bt^{*}))_{\Gamma_k})\|\\
			&\leq& (1+L_f)\|\bt^{k} - \bt^* \|. ~~~~~ (\text{by $0< \alpha_k\leq 1$ and \eqref{Lip-g}})  
		\end{array}
	\end{eqnarray}
	By Lemma \ref{quadratic-lemma} 2),  the Newton step is always admitted for sufficiently large $k$. Then direct calculations lead the following chain of inequalities,
	\begin{eqnarray*} 
		\arraycolsep=1.4pt\def\arraystretch{1.5}
		\begin{array}{llll}
			&& \|\bt^{k+1}-\bt^*\| = \|\bv^{k}-\bt^*\| \\
			&=& \|\bv^{k}_{\Gamma_{k}}-\bt^*_{\Gamma_{k}}\| ~~~~~~~~(\text{by   $\bv^k_{\overline \Gamma_k}=\bt^*_{\overline \Gamma_k}=0 $ from \eqref{support-identify}})\\
			&=& \|\bu^{k}_{\Gamma_{k}}-\bt^*_{\Gamma_{k}} - (H^k_{\Gamma_{k}\Gamma_{k}})^{-1} (\nabla f(\bu^{k}))_{\Gamma_k}\| &~~(\text{by   \eqref{Newton-descent-property}})\\
			&=& \|\bu^{k}_{\Gamma_{k}}-\bt^*_{\Gamma_{k}} - (H^k_{\Gamma_{k}\Gamma_{k}})^{-1} ((\nabla f(\bu^{k}))_{\Gamma_k}-(\nabla f(\bt^{*}))_{\Gamma_k})\| &~~(\text{by   \eqref{b*-gamma}})\\
			&\leq&(1/ l_f)  \|H^k_{\Gamma_{k}\Gamma_{k}}(\bu^{k}_{\Gamma_{k}}-\bt^*_{\Gamma_{k}} )- ((\nabla f(\bu^{k}))_{\Gamma_k}-(\nabla f(\bt^{*}))_{\Gamma_k})\| &~~(\text{by \eqref{s-regular-1} or \eqref{s-regular}})\\
			&\leq&(1/ l_f)  \|\nabla^2 f(\bu^k)(\bu^{k}-\bt^* )- (\nabla f(\bu^{k})-\nabla f(\bt^{*}))\| & \\
			&=&(1/ l_f)  \|\int_0^1 (\nabla^2 f(\bu^*+t(\bu^{k}-\bt^*))-\nabla^2 f(\bu^k)) (\bu^{k}-\bt^* )dt\| & \\ 
			&\leq&(1/ l_f)   \int_0^1 \|\nabla^2 f(\bu^*+t(\bu^{k}-\bt^*))-\nabla^2 f(\bu^k)\|\|\bu^{k}-\bt^*\|dt. 
		\end{array}
	\end{eqnarray*}
	Note that if $\ell=\ell_{lin}$, then $\nabla^2 f(\bu^*+t(\bu^{k}-\bt^*))=\nabla^2 f(\bu^k)=Q$. The above condition implies $\|\bt^{k+1}-\bt^*\|\leq 0$, namely, $\bt^{k+1}=\bt^*$. If $\ell=\ell_{log}$, then  above condition implies
	\begin{eqnarray*} 
		\arraycolsep=1.4pt\def\arraystretch{1.5}
		\begin{array}{llll}
			\|\bt^{k+1}-\bt^*\|  
			&\leq&(1/ l_f)  \int_0^1 C_f \| \bu^*+t(\bu^{k}-\bt^*) - \bu^{k}\|\|\bu^{k}-\bt^*\|dt &~~(\text{by  \eqref{Lip-hassian}})  \\
			&\leq&(C_f/ l_f) \|\bu^{k}-\bt^*\|^2  \int_0^1 (1-t)dt\\
			&=&(C_f /(2l_f))\|\bu^{k}-\bt^*\|^2.
		\end{array}
	\end{eqnarray*}
	which combining \eqref{fact-uk-x*} can make the conclusion immediately.  {The proof is complete.\qed}
\end{proof}

\section{Numerical experiments}\label{sec:num}

This section implements GPNA to solve SCL with synthetic datasets and real datasets. All numerical experiments are conducted by running MATLAB (R2018b) on an ideapad with   CPU @2.30GHz 2.40GHz and 4GB memory. Apart from the stopping criterion outlined in the algorithm, we also set the maximum number of iterations to 1000. We set $S=\varepsilon=0.0001, \epsilon =0.001, \alpha_0=1$ and $\gamma=0.5$. The initial point is chosen as $\bt^0=0$.

\subsection{ {SLCoRe model for discrete response variables}}
In this subsection, we  solve SCL with $\ell=\ell_{log}$, namely, SLCoRe. This model usually works well for the data with discrete response variables. In the sequel, we first present two testing examples, followed by the parameters' tuning for GPNA and its numerical comparisons with some benchmark methods on synthetic and real datasets.

\subsubsection{ {Test examples}}
Synthetic and real data are tested for SLCoRe.
\begin{example}[Synthetic data]\label{synthetic1} Similar to \cite{Bahm}, each sample $\bx_{i},i\in[n]$ in $X \in \mathbb{R}^{n\times p_1}$ is independently generated by an autoregressive process
	$$ x_{i(j+1)}=\theta   x_{ij}+\sqrt{1-\theta^{2}} c_{j} \quad \text { for all } j \in[p_{1}-1],$$
	with $x_{i1} \in \mathcal{N}(0,1), c_{j} \in \mathcal{N}(0,1)$ and $\theta \in [0,1)$ being the correlation parameter. Note that the larger $\theta$ is, the more correlated  two columns are. Let $Z= X + 0.01\cdot\Lambda$ with $\Lambda_{ij}\in\mathcal{N}(0,1)$. Therefore, for such an example, $p_1=p_2=:p$.  The sparse parameters $\bt_{1}\in \mathbb{R}^{p}$ and  $\bt_{2}\in \mathbb{R}^{p}$ have $s_{1}$ and $s_{2}$ nonzero entries  that are drawn independently from the standard Gaussian distribution, respectively. Finally, response $\by \in\{0,1\}^{n}$ is  randomly  generated from the Bernoulli distribution with
	$$\operatorname{Prob}\{y_i=0 \mid \bx_i,\bz_i\}=\frac{1}{2}\left[\frac{1}{ 1+\exp \left(-\langle \bx_i, \bt_{1} \rangle \right)} +\frac{1}{ 1+\exp \left(-\langle \bz_i, \bt_{2} \rangle \right)} \right].$$
\end{example} 
\begin{example}[Real data]\label{real1} Two real datasets are taken into account. They are the alcohol dependence data with $n =46$, $p_{1} =500$ and $p_{2} =300$  \cite{Zhangh}\footnote{Available at \url{https://github.com/cran/CVR/blob/master/data/alcohol.rda} }   and Diffuse large B-cell lymphoma (DLBCL) data with $n =203$, $p_{1} =17350$ and $p_{2} =386165$  \cite{Lenz} \footnote{Available at \url{http://www.ncbi.nlm.nih.gov/geo/query/acc.cgi?acc=GSE11318}}. All datasets are feature-wisely scaled to $[-1,1]$.  
\end{example} 

To evaluate the performance of one method, we report  the CPU time (in seconds), the classification error rate (CER) \cite{Wang} and the canonical correlation value (CCV) defined by
\begin{eqnarray*}
	\text{CER}:=\frac{\|\operatorname{sign} ( X\bt_{1} )-\by\|_0+ \|\operatorname{sign} ( Z\bt_{2} )-\by\|_0}{n},~~
	\text{CCV}:=\frac{\|X\bt_1-Z\bt_2\|}{n},
\end{eqnarray*} 
where $\bt=(\bt_{1};\bt_{2})$ is the solution obtained by one method and  $({\rm sign}(\bx))_i=1$ if $x_i > 0$ and $({\rm sign}(\bx))_i=0$ otherwise for $i\in[n]$. Note that the smaller CER (or the smaller CCV or the shorter CPU time) the better performance. 
\begin{figure}[!th]
	\centering
	\includegraphics[width=.495\textwidth]{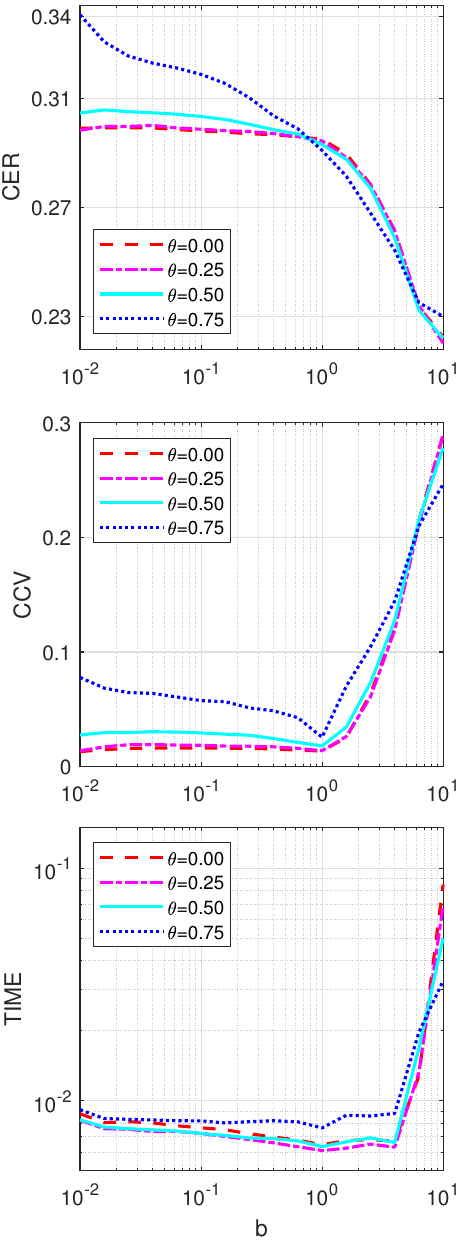}  
	\includegraphics[width=.495\textwidth]{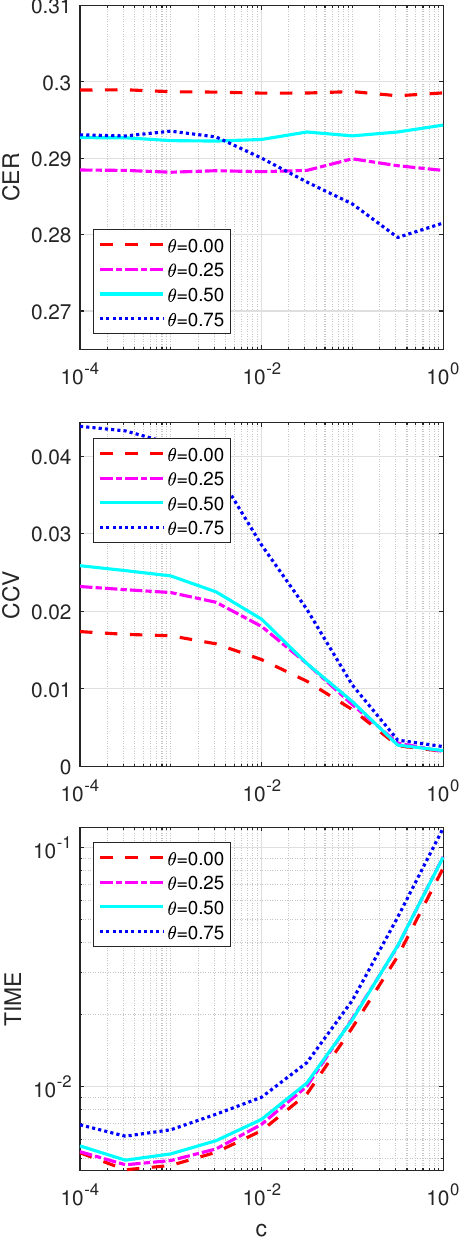}\vspace{-3mm}
	\caption{Effect of  $b$ and $c$ for Example \ref{synthetic1}.}\label{fig:effect-c}	
\end{figure}

\subsubsection{Sensitivity analysis} We now implement GPNA to see its performance under different choices of $(a,b,c,s_1,s_2)$ .

{\bf (a) Effect of $(a,b,c)$.} Recall that there are three parameters $(a,b,c)$ involved in problem \eqref{005}. We fix $a=1, c=0.01$ but vary $b\in[0.01,10]$ to see the effect of $b$ and  fix $a=1, b=1$ but change $c\in[0.0001,1]$ to see the effect of $c$. The average results over 100 instances for Example \ref{synthetic1} are presented in  {Fig. \ref{fig:effect-c}}, where $n=200,p=2000$ and $s_1=s_2=20$. 

When $a$ and $c$ are fixed, from the three above sub-figures in  {Fig. \ref{fig:effect-c}}, one can observe that CER is declining steadily when $b\in[0.01,1)$ but dramatically  when $b\in[1,10]$. However, the best choice of $b$ for CCV and CUP time is $b=1=a$. Therefore, for Example \ref{synthetic1} with $p_1=p_2$ and $s_1=s_2$, the best option to set $a$ and $b$ should be $a=b$.

When $a$ and $b$ are fixed, from the three bottom sub-figures in  {Fig. \ref{fig:effect-c}}, it can be clearly seen that the larger values of $c$, the smaller CCV and longer CPU time. One can observe that the variance of $c\in[0.0001,0.01]$ do not influence CER significantly.

We test some other choices and find the following options for $(a,b,c)$ that allows GPNA to render desirable overall performance:
$$a=\frac{s_1}{s_1+s_2},~~b=\frac{s_2}{s_1+s_2},~~c=\frac{1}{s_1+s_2}.$$
Therefore, in the following numerical experiments, we fix $a,b,c$ as above choices if no additional information is provided.

{\bf (b) Effect of $(s_1,s_2)$.} To see the effect of  $s_1$ and $s_2$, we  choose both $s_1$ and $s_2 $ from $ \{5,10,\cdots,40\}$. The average results of GPNA for Example \ref{synthetic1} are shown in  {Fig. \ref{fig:effect-s1s2}} where $n=200,p=2000, \theta=0.5$. The figure demonstrates that the larger $s_1$ or $s_2$ the higher values of CER, leading to better performance. Moreover, the closer between  $s_1$ and $s_2$ is, the smaller CCV is. 

\begin{figure}[!th]
	\centering
	\includegraphics[width=.475\textwidth]{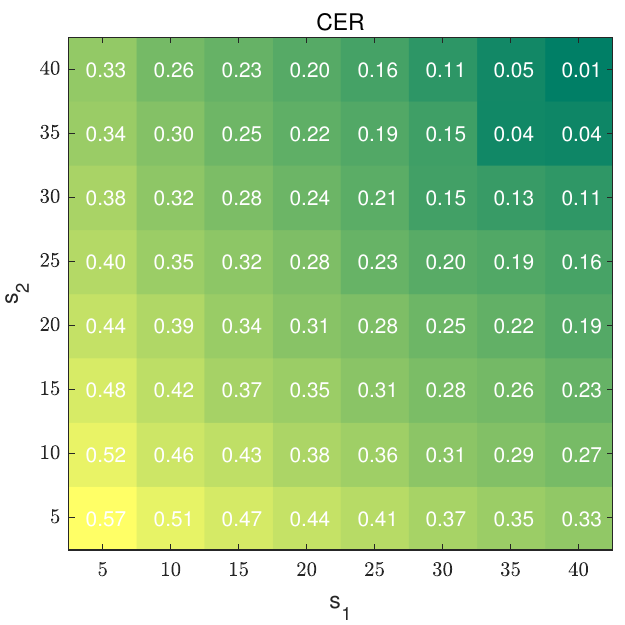} 
	\includegraphics[width=.475\textwidth]{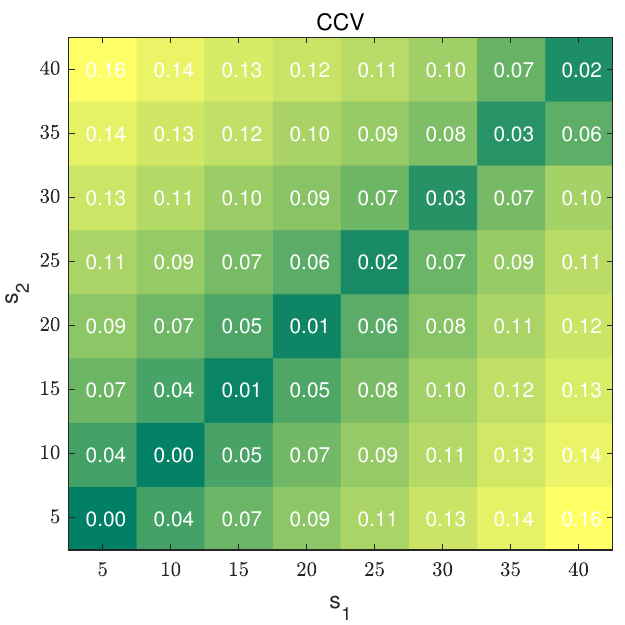}\vspace{-3mm}
	\caption{Effect of  $s_1$ and $s_2$ for Example \ref{synthetic1}.}\label{fig:effect-s1s2}	
\end{figure}

\subsubsection{ {Effectiveness}}

To illustrate the effectiveness of our proposed model SLCoRe as well as the method GPNA, several alternative approaches are selected. They are SCoRe \cite{GRO}, GPGN \cite{Wang}, GraSP \cite{Bahm}, IIHT \cite{Pan} and NTGP \cite{Yuan2017}. The first one is used to solve the SCoRe, which can be used to illustrate that SLCoRe is a better model than SCoRe for the discrete response variables. GPGN, GraSP,  IIHT and NTGP  solve the sparse logistic regression that merges two datasets into a single one, which can be used to highlight the advantage of the model SLCoRe for two interrelated  datasets. 

{\bf (c) Comparison for Example \ref{synthetic1}.}   For simplicity, we fix $n=1000,p=10000$ while choose $\theta\in\{0,0.5,0.8\}$ and $s_1,s_2\in\{200,300,500\}$. For each case of $(\theta,s_1,s_2)$, we test 100 instances and report the average results of GPNA, SCoRe, GPGN, IIHT, GraSP and NTGP. Some comments on the reported data in Tables \ref{Table1} and \ref{Table1-1} can be made.

\begin{table}[!htbp]
	\renewcommand{\arraystretch}{1}\addtolength{\tabcolsep}{1.8pt}
	\caption{Comparison of the results for Example \ref{synthetic1}. }\label{Table1}
	\centering
	\begin{tabular}{ll|ccc|ccc|ccc}
		\hline
		&&\multicolumn{3}{c}{CER}& \multicolumn{3}{|c|}{CCV}&\multicolumn{3}{c}{TIME}\\\cline{3-11}
		&  &\multicolumn{3}{c}{$s_2$}& \multicolumn{3}{|c|}{$s_2$}&\multicolumn{3}{c}{$s_2$}\\
		$s_1$&Algs. & 200&300& 500 &200&300& 500&200&300& 500\\
		\hline
		\multicolumn{11}{c}{\textbf{$\theta=0$}}\\\hline		
		{$200$}&
		GPNA& \textbf{0.013} &\textbf{0.016} & \textbf{0.022}&\textbf{0.040} &\textbf{0.074} &\textbf{0.086}&\textbf{00.5} &\textbf{00.6} &\textbf{00.5} \\
		&SCoRe& 0.410 &0.452 & 0.398&0.231 &0.332 &0.354 &15.5 &17.1 &18.1 \\
		&IIHT& 0.396 & 0.259&0.263 &0.286& 0.512&0.519 &01.5 &01.7 &01.5 \\
		&GraSP& 0.224 & 0.363&0.245 &0.384& 0.467&0.596 &02.9 &02.2 &01.6 \\
		&GPGN& 0.117  &0.144  &0.131  &0.486 & 0.643 & 0.678 &00.7  &00.7  & 00.8 \\
		&NTGP& 0.128  &  0.137&0.143  &0.573 & 0.586 & 0.697 &01.1  & 01.2 &01.2  \\\hline
		{$300$}
		&GPNA&\textbf{0.012}  &\textbf{0.000} &\textbf{0.000} &\textbf{0.084} &\textbf{0.032} & \textbf{0.062}&\textbf{00.4} &\textbf{00.5} &\textbf{00.5} \\
		&SCoRe&0.391  &0.384 &0.423 &0.382 &0.518 &0.521 &20.3 &24.2 &23.6 \\
		&IIHT& 0.241 & 0.256&0.407 &0.476& 0.561&0.869 &01.5 &01.7 &01.5 \\
		&GraSP& 0.237 & 0.266&0.304 &0.561& 0.627&0.922&03.8 &02.1 &01.6 \\
		&GPGN& 0.134  & 0.107 & 0.096 &0.558 &0.734  &0.877  & 00.7 &00.7  & 00.8 \\
		&NTGP& 0.118  & 0.142 &0.153  &0.621 & 0.727 & 0.973 & 01.4 &01.5  & 01.4 \\\hline
		{$500$}
		&GPNA& \textbf{0.024} & \textbf{0.000}&\textbf{0.000} &\textbf{0.087} & \textbf{0.061}&\textbf{0.014} &\textbf{00.5} &\textbf{00.7} &\textbf{00.6} \\
		&SCoRe&0.425  &0.459 &0.480 &0.231 &0.242 &0.318 &20.3 &24.6 &26.4 \\
		&IIHT& 0.323 & 0.413&0.328 &0.396& 0.461&0.469&01.5 &01.3 &01.5 \\
		&GraSP& 0.243 & 0.261&0.252 &0.853& 0.886&0.877 &01.6 &01.3 &01.1 \\
		&GPGN& 0.126  & 0.109 & 0.115 &0.878 & 0.974 &0.963  &00.8  &00.9  &00.9  \\
		&NTGP& 0.135  &0.183  & 0.167 &0.931 &0.924  &0.987  & 01.4 &01.5  &01.6  \\
		\hline
		\multicolumn{11}{c}{\textbf{$\theta=0.5$}}\\\hline
		{$200$}
		&GPNA&\textbf{0.014}  &\textbf{0.021} &\textbf{0.023} &\textbf{0.050} &\textbf{0.086} &\textbf{0.087} &\textbf{00.4} &\textbf{00.4} &\textbf{00.5} \\
		&SCoRe&0.423  & 0.398& 0.451&0.213 &0.252 &0.385 &16.7 &18.9 &21.1 \\
		&IIHT& 0.264 & 0.253&0.246 &0.319& 0.478&0.491 &02.1 &01.5 &02.0 \\
		&GraSP& 0.243 & 0.258&0.251 &0.343& 0.437&0.553 &04.7 &02.6 &01.5 \\
		&GPGN& 0.134  & 0.118 & 0.156 &0.461 & 0.586 & 0.672 &00.6  &00.7  &00.7  \\
		&NTGP& 0.144  &0.152  &0.153  & 0.429&0.536  & 0.543 & 01.5 & 01.6 & 01.8 \\\hline
		{$300$}
		&GPNA&\textbf{0.017 } &\textbf{ 0.000}&\textbf{0.000} &\textbf{0.086} &\textbf{0.032} &\textbf{0.043} &\textbf{00.4} &\textbf{00.6} &\textbf{00.7}\\
		&SCoRe& 0.423 &0.384 &0.366 &0.247 &0.342 &0.425 & 21.5&23.7 &24.8 \\
		&IIHT& 0.246 & 0.273&0.282 &0.324& 0.363&0.513 &01.7 &01.9 &01.5 \\
		&GraSP& 0.257 & 0.253&0.271 &0.337& 0.472&0.438 &03.2 &01.9 &01.4 \\
		&GPGN&0.145 &0.157  & 0.138 &0.512 & 0.466 &0.539  &00.7  &00.8  &00.8  \\
		&NTGP& 0.148  &0.157  &0.143  &0.384 & 0.473 & 0.614 & 01.6 &01.7  &01.9  \\\hline
		{$500$}
		&GPNA&\textbf{ 0.018} & \textbf{0.000}& \textbf{0.000}& \textbf{0.089}&\textbf{0.071} &\textbf{0.020} &\textbf{00.5} &\textbf{00.7} &\textbf{00.7} \\
		&SCoRe&0.443  &0.483 &0.456 &0.343 &0.462 &0.437 &18.4 &22.9 &28.4 \\
		&IIHT& 0.239 & 0.252&0.399 &0.478& 0.526&0.854 &01.8 &01.6 &01.7 \\
		&GraSP& 0.258 & 0.264&0.285 &0.523& 0.528&0.694 &01.5 &01.3 &01.1 \\
		&GPGN& 0.157  &0.127  &0.148  & 0.633&0.579  & 0.715 & 00.7 &\textbf{00.7} & 00.8 \\
		&NTGP& 0.128  & 0.137 &0.153  &0.584 & 0.162 & 0.849 & 01.7 &01.6  &01.7  \\
		\hline
	\end{tabular}
\end{table}

\begin{table}[!th]
	\renewcommand{\arraystretch}{1}\addtolength{\tabcolsep}{2.3pt}
	\caption{Comparison of the results for Example \ref{synthetic1}. }\label{Table1-1}
	\centering
	\begin{tabular}{ll|ccc|ccc|ccc}
		\hline
		&&\multicolumn{3}{c}{CER}& \multicolumn{3}{|c|}{CCV}&\multicolumn{3}{c}{TIME}\\\cline{3-11}
		&  &\multicolumn{3}{c}{$s_2$}& \multicolumn{3}{|c|}{$s_2$}&\multicolumn{3}{c}{$s_2$}\\
		$s_1$&Algs. & 200&300& 500 &200&300& 500&200&300& 500\\
		\hline
		\multicolumn{11}{c}{\textbf{$\theta=0.8$}}\\\hline
		{$200$}
		&GPNA&\textbf{0.055}  &\textbf{0.051} & \textbf{0.060}&\textbf{0.085} &\textbf{0.104} &\textbf{0.105} &\textbf{00.4} &\textbf{00.5} &\textbf{00.5} \\
		&SCoRe&0.491  &0.473 &0.432 &0.252 &0.344 &0.335 &14.7 &19.4 &21.6 \\
		&IIHT& 0.282 & 0.260&0.251 &0.274& 0.417&0.423&02.0 &02.1 &01.9 \\
		&GraSP& 0.301 & 0.253&0.268 &0.284& 0.349&0.464 &06.4 &04.1 &03.5\\
		&GPGN&0.167 &0.137  & 0.142 & 0.433& 0.512 &0.641  &00.6  & 00.7 & 00.7 \\
		&NTGP&  0.211 & 0.176 & 0.189 &0.343 & 0.487 & 0.622 & 01.7 & 01.7 &01.8  \\\hline
		{$300$}
		&GPNA&\textbf{0.052}  &\textbf{0.000} &\textbf{0.000} &\textbf{0.105} &\textbf{0.059} &\textbf{0.093} & \textbf{00.4}& \textbf{00.5}&\textbf{00.5} \\
		&SCoRe&0.435  & 0.412&0.397 &0.334 &0.338 &0.396 &16.4 &20.3 &23.7 \\
		&IIHT& 0.268 & 0.271&0.257 &0.434& 0.475&0.587&01.6 &01.7 &01.4 \\
		&GraSP& 0.276 & 0.284&0.245 &0.376& 0.433&0.639 &03.9 &03.2 &01.8 \\
		&GPGN& 0.154  &0.138  &0.165  &0.533 &0.537  & 0.626 & 00.7 &00.7  &00.8  \\
		&NTGP& 0.204  &0.225  & 0.188 &0.526 &0.491  &0.654  & 01.6 & 01.7 &01.9  \\\hline
		{$500$}
		&GPNA& \textbf{0.061} & \textbf{0.000}&\textbf{0.000} &\textbf{0.108} & \textbf{0.085}&\textbf{0.031} &\textbf{00.5} &\textbf{00.5} &\textbf{00.6} \\
		&SCoRe&0.457  &0.423 &0.382 &0.324 &0.356 &0.431 &18.4 &22.8 &28.7 \\
		&IIHT& 0.255 & 0.258&0.266 &0.527& 0.529&0.912 &01.6 &01.4 &02.0 \\
		&GraSP& 0.239 & 0.254&0.263 &0.518& 0.538&0.883 &02.4 &01.8 &01.4 \\
		&GPGN& 0.154  &0.162  &0.166  & 0.634&0.568  & 0.942 & 00.7 & 00.8 &00.8  \\
		&NTGP& 0.173  & 0.213 &0.188  & 0.638& 0.652 &0.875  & 01.7 & 01.9 & 01.8 \\
		\hline
	\end{tabular}
\end{table}

Regarding CER,  GPNA achieves the minimum  values compared with other methods regardless of the sparsity and correlation how to change. The error rate of the other five methods is more than 10\% for the case of two data sets. Moreover, CERs obtained by  GPNA, GPGN, IIHT, GraSP and NTGP are smaller than SCoRe, which indicates that $\ell_{log}$ is more advantageous than $\ell_{lin}$ for the discrete responses.

Regarding CCV,  GPNA delivers tiny  values, which shows that there is a high correlation between the two datasets.  Although  SCoRe can also reveal the relationship between two datasets, the result is not as good as GPNA. Nevertheless, they both perform smaller CCVs than GPGN, IIHT, GraSP and NTGP since the latter four methods solve the model that ignores the   relationship between two datasets.

Regarding CPU time, it is obvious that GPNA
is the fastest and the calculations take less than a second for all scenarios. By contrast, the other methods need much longer time, especially for larger sparsity, with SCoRe taking 28 seconds, which is 47 times longer than GPNA.

{\bf (d) Comparison for Example \ref{real1}.}  This part reports the numerical comparisons of GPNA, SCoRe, GPGN, IIHT, GraSP  and NTGP for analysing two real datasets.

We first apply our method to jointly analyze methylation and gene expression data in an alcohol dependence study \cite{Zhangh}.   SLCoRe can be used to identify the canonical variates from DNA methylation (corresponding to $X$) and gene expression  (corresponding to $Z$) supervised by the phenotypical information, e.g., alcohol use disorder (AUD), which is observed as a binary indicator variable  $\by$. In this study, genome-wide DNA methylation levels and genome-wide expression levels of genes are quantified for $n =46$ European Australians. Similar to \cite{Luoc}, we choose top $p_{1} =500$ CpG sites and $p_{2} =300$ genes associated with AUD.

\begin{table}[!htbp]
	\renewcommand{\arraystretch}{.88}\addtolength{\tabcolsep}{7pt}
	\caption{Comparison of the results for Example \ref{real1}.}\label{Table2}
	\centering
	\begin{tabular}{l|ll|lccc|cc}
		\hline
		&  & & \multicolumn{3}{c}{Training} && \multicolumn{2}{c}{Testing}   \\\cline{4-6}\cline{7-9}
		&$s_1$ &$s_2$&CER &CCV &TIME(s)&&  CER& CCV  \\\cline{1-9}
		\multicolumn{9}{c}{AUD}\\\hline
		SCoRe& & &  0.617&0.200 &001.6&&0.582& 0.278 \\\hline
		GPNA&20&10&\textbf{0.025}&\textbf{0.004}&\textbf{000.2}&&\textbf{0.004}&\textbf{0.005}\\
		&20&20&\textbf{0.020}&\textbf{0.009} & \textbf{000.2}&&\textbf{0.002}&\textbf{0.007} \\ 
		&35&20&\textbf{0.018}& \textbf{0.008}&\textbf{000.2}&&\textbf{0.002}&\textbf{0.012} \\
		&35&35&\textbf{0.017}&\textbf{0.007} &\textbf{000.3}&&\textbf{0.000}& \textbf{0.005}\\	\hline
		IIHT&20&10&0.525&0.248 &001.6&&0.480&0.890\\
		&20&20&0.472&0.251 &001.6&&0.530&0.893\\
		&35&20&0.455& 0.251&001.6&&0.463&0.889\\
		&35&35&0.466&0.253&001.7&&0.428&0.871\\
		GraSP&20&10&0.528& 0.338&001.5&&0.410&0.932\\
		&20&20&0.443&0.336&001.3&&0.500&0.919\\
		&35&20&0.487&0.328&001.7&&0.422&0.922\\
		&35&35&0.482&0.334&001.7&&0.338&0.916\\\hline
		GPGN&20&10&0.243 &0.365  &\textbf{000.2} &&0.334 & 0.974\\
		&20&20&0.284 &0.378 &000.3 &&0.347 &0.868 \\
		&35&20&0.233&0.469&000.3&&0.346&0.884\\
		&35&35&0.215&0.478&\textbf{000.3}&&0.317&0.967\\\hline
		NTGP&20&10&0.556 & 0.595 &000.3 &&0.420 & 0.863\\
		&20&20&0.524 &0.553 &000.3 &&0.376 & 0.761\\
		&35&20&0.488&0.528&000.3&&0.397&0.837\\
		&35&35&0.472&0.557&\textbf{000.3}&&0.385&0.868\\
		\cline{1-9}		
		\multicolumn{9}{c}{DLBCL}\\\hline
		SCoRe& & & 0.753& 0.592&070.4 && 0.682 &0.634 \\\hline
		GPNA&50&50&\textbf{0.054} & \textbf{0.036}&\textbf{000.3}&&\textbf{0.024}&\textbf{0.029} \\
		&50&100&\textbf{0.034}& \textbf{0.067}&\textbf{000.3}&&\textbf{0.039} & \textbf{0.085}\\
		&100&100&\textbf{0.000}&\textbf{0.024} &\textbf{000.3}&&\textbf{0.001} &\textbf{0.022} \\
		&100&150&\textbf{0.000}&\textbf{0.017} &\textbf{000.3 }&&\textbf{0.002} &\textbf{0.014} \\	\hline	
		IIHT&50&50&0.471&0.796 &042.5&&0.464&0.732\\
		&50&100&0.458&0.763 &043.6&&0.483&0.746\\
		&100&100&0.488&0.743 &046.3&&0.462&0.737\\
		&100&150&0.482&0.737 &047.6&&0.455&0.739\\\hline
		GraSP&50&50&0.456&0.854 &235.4&&0.472&0.861\\
		&50&100&0.458& 0.846&254.7&&0.483&0.867\\
		&100&100&0.432&0.852 &228.6&&0.457&0.854\\
		&100&150&0.427& 0.848&233.2&&0.463&0.851\\\hline
		GPGN&50&50&0.408 &0.973  &000.9 &&0.487 &0.832 \\
		&50&100&0.384 & 0.972&000.9 &&0.453 &0.731 \\
		&100&100&0.387&0.881&001.1&&0.473&0.848\\
		&100&150&0.395&0.956&001.1&&0.434&0.907\\\hline
		NTGP&50&50&0.421 & 0.834 &038.2 &&0.428 & 0.911\\
		&50&100& 0.452&0.786 &040.3 &&0.478 & 0.841\\
		&100&100&0.478&0.879&041.8&&0.503&0.812\\
		&100&150&0.474&0.934&042.4&&0.433&0.865\\
		\hline
	\end{tabular}
\end{table}

We use a random splitting procedure to compare the six methods. At each split, 10 observations are randomly chosen as the testing data and the remaining 36 observations are the training data. The random splitting is repeated 100 times. We choose different sparsity and the average results are reported in Table \ref{Table2} and show  the better behaviour of GPNA since it obtains lower CER (meaning better predictions),  smaller CCV and runs much faster.

We next deal with a higher dimensional real dataset DLBCL \cite{Lenz}. It comprises  of $n=203$ patients, each of which has $p_1=17350$ gene expression  and $p_2=386165$ copy numbers. We  fixate on the case where $\by$ is a binary variable indicating the survival or death or the cancer subtype. Again, the 203 samples are split into 153 ones as the training set and 50 ones as the testing set. The random splitting is repeated 100 times. Similar phenomenon to AUD data can be observed for DLBCL in Table \ref{Table2}, showing the better performance of GPNA.

\subsection{ {SCoRe model for continuous response variables}}
In the subsequent numerical experiments, we focus on SCL with $\ell=\ell_{lin}$, namely, SCoRe. This model is proper  for the  data with  continuous response variables. For such a model, we also do parameters' tuning for GPNA and get similar observations to that for SLCoRe. Therefore, we keep the same setting of parameters as previous examples for GPNA.

\subsubsection{ {Test examples}}
Again, synthetic and real data are tested for SCoRe.
\begin{example}[Synthetic data]\label{synthetic2} The sample data $X$ and $Z$ as well as the sparse parameters $\bt_{1}\in \mathbb{R}^{p}$ and  $\bt_{2}\in \mathbb{R}^{p}$ are generated the same as Example \ref{synthetic1}, while the response $\by $ is  generated by
	$\by=  (X\bt_{1}+Z\bt_{2})/2.$
\end{example} 
\begin{example}[Real data]\label{real2} Two real datasets are taken into consideration. They are the body mass index (BMI) of mouse data with $n =294$, $p_{1} =163$ and $p_{2} =215$   \cite{Wangs}\footnote{Available at \url{https://github.com/cran/CVR/blob/master/data/mouse.rda} }  and DLBCL data. All datasets are feature-wisely scaled to $[-1,1]$.  
\end{example} 

To evaluate the performance of one method, we report  the CPU time (in seconds), the mean square error (MSE) and CCV defined by
\begin{eqnarray*}
	\text{MSE}:=\frac{\|\by-X\bt_1\|+\|\by-Z\bt_2\|}{n},~~~~	\text{CCV}:=\frac{\|X\bt_1-Z\bt_2\|}{n},
\end{eqnarray*} 
where $\bt=(\bt_{1};\bt_{2})$ is the solution obtained by one method.

\subsubsection{ {Effectiveness}}

Besides three aforementioned methods SCoRe, GraSP, IIHT, we also select two additional methods  SP \cite{Dai} and LNA \cite{Zhao2021} for comparisons. Again, GraSP, IIHT,  SP and LNA are solving the problem without consider the interrelationship between two datasets. 

{\bf (e) Comparison for Example \ref{synthetic2}.} We first compare GPNA  with the other five methods for Example \ref{synthetic2}. For simplicity, we fix $n=2000,p=6000$ while choose $\theta\in\{0,0.5\}$ and $s_1,s_2\in\{100,200,500\}$. For each case of $(\theta,s_1,s_2)$, we test 100 instances and report the average results of GPNA, SCoRe, IIHT, GraSP, SP and LNA.  Some comments on the data in Table \ref{Table4} can be made. 

\begin{table}[!htbp]
	\renewcommand{\arraystretch}{1}\addtolength{\tabcolsep}{2pt}
	\caption{Comparison of the results for Example \ref{synthetic2}. }\label{Table4}
	\centering
	\begin{tabular}{ll|ccc|ccc|ccc}
		\hline
		&&\multicolumn{3}{c}{MSE}& \multicolumn{3}{|c|}{CCV}&\multicolumn{3}{c}{TIME}\\\cline{3-11}
		&  &\multicolumn{3}{c}{$s_2$}& \multicolumn{3}{|c|}{$s_2$}&\multicolumn{3}{c}{$s_2$}\\
		$s_1$&Algs. & 100&200& 500 &100&200& 500&100&200& 500\\
		\hline
		\multicolumn{11}{c}{\textbf{$\theta=0$}}\\\hline
		{$100$}&
		GPNA& \textbf{0.083} &\textbf{0.097} & \textbf{0.171}&\textbf{0.015} &\textbf{0.081}& \textbf{0.169}&\textbf{00.3} &\textbf{00.4} &\textbf{00.4} \\
		&SCoRe& 0.243 &0.268 & 0.284&0.031 &0.092 &0.201 &16.5 &17.6 &19.3\\
		&IIHT& 0.157  &0.189  &0.267  &0.155 &0.189  &0.245  & 01.3 &01.9  &07.3  \\
		&GraSP& 0.232  &0.173  & 0.322 &0.163 & 0.211 & 0.258 & 14.4 & 18.5  &33.6  \\
		&SP& 0.226  &0.255  & 0.364 & 0.160&0.185  &0.263  &01.4  &01.9  &21.1  \\
		&LNA& 0.167  & 0.173  & 0.247  & 0.174 & 0.177  & 0.223  & 00.7  & 00.8  & 01.4  \\\hline
		{$200$}
		&GPNA&\textbf{0.097}  &\textbf{0.115} &\textbf{0.161} &\textbf{0.081} &\textbf{0.010} & \textbf{0.139}&\textbf{00.4} &\textbf{00.5} &\textbf{00.5} \\
		&SCoRe&0.214  &0.277 &0.286 &0.082 &0.093 &0.175 &19.3 &19.2 &21.7\\
		&IIHT&0.207   &0.227  &0.287  &0.195 & 0.223 &0.240  &01.9  &03.2 &09.8  \\
		&GraSP& 0.211  & 0.252 & 0.324 & 0.243 & 0.245 &0.251 & 21.7 & 27.2 &47.4  \\
		&SP& 0.243  &0.309  & 0.394& 0.174&0.223  &0.271  &01.8 &03.1  &24.6  \\
		&LNA& 0.216  & 0.236  & 0.317  &0.225  &0.244   &  0.238 &00.8   & 00.9  & 01.6  \\\hline
		{$500$}
		&GPNA& \textbf{0.165} & \textbf{0.148}&\textbf{0.182} &\textbf{0.163} & \textbf{0.142}&\textbf{0.063} &\textbf{00.4} &\textbf{00.4} &\textbf{00.6} \\
		&SCoRe&0.291  &0.318 &0.285 &0.184 &0.147 &0.185 &17.4 &21.4 &23.7 \\
		&IIHT&0.288   &0.286  &0.367  &0.243 &0.235  &0.282 &08.6  & 11.7 & 16.4 \\
		&GraSP& 0.317  & 0.264 & 0.334 & 0.259& 0.221 &0.273  & 35.6 &50.2  &87.6  \\
		&SP& 0.351  &0.411  & 0.476& 0.253&0.288  &0.252  &16.5 &39.8  &56.1  \\
		&LNA&0.256   &0.342   & 0.329  & 0.285 & 0.283  & 0.264  &01.5   &01.7   & 02.6  \\
		\hline
		\multicolumn{11}{c}{$\theta=0.5$}\\\hline	
		{$100$}&
		GPNA& \textbf{0.094} &\textbf{ 0.096}&\textbf{0.188} &\textbf{0.015} & \textbf{0.082}&\textbf{0.172} &\textbf{00.3} &\textbf{00.6} &\textbf{00.9} \\
		&SCoRe&0.242  &0.265 &0.267 &0.144&0.167 &0.184 &16.7 &19.4 &22.3 \\
		&IIHT& 0.172  &0.196  &0.285  &0.163 &0.182  &0.241 & 01.8 & 02.5 & 11.5 \\
		&GraSP& 0.187  &0.224  & 0.273 &0.152 & 0.187 & 0.252 & 18.3 & 24.2  &32.6  \\
		&SP& 0.224 &0.252  & 0.377& 0.159&0.183  &0.269 &01.3 &01.9  &27.4 \\
		&LNA& 0.213  & 0.189  & 0.254  & 0.171 & 0.176  & 0.253  &00.6   &00.8   & 01.5  \\\hline
		{$200$}&
		GPNA&\textbf{0.095}  &\textbf{0.117} &\textbf{0.168} &\textbf{0.088} &\textbf{0.031} &\textbf{0.137} &\textbf{ 00.4}& \textbf{00.6}&\textbf{00.8} \\
		&SCoRe&0.212  & 0.224&0.281 &0.158 &0.145 &0.174 &18.8 &20.5 &23.1 \\
		&IIHT&0.196   &0.233  &0.317  &0.203 &0.218  &0.263 &02.6  &03.9  & 11.9 \\
		&GraSP& 0.248  & 0.236 & 0.339 & 0.199 & 0.253 &0.256 & 21.7 & 35.3 &47.8  \\
		&SP& 0.289 &0.322  & 0.368& 0.214&0.217  &0.282  &01.8 &03.5  &29.7 \\
		&LNA& 0.226  & 0.265  & 0.287  &0.248  & 0.255  & 0.252  & 00.9  &00.9   & 01.7  \\\hline
		{$500$}&
		GPNA& \textbf{0.187} & \textbf{0.167}&\textbf{0.182} &\textbf{0.183} & \textbf{0.129}&\textbf{0.056} &\textbf{00.6} &\textbf{00.7} &\textbf{00.7} \\
		&SCoRe&0.287 &0.245 &0.272 &0.195 &0.146 &0.122 &20.4 &21.8 &23.3 \\
		&IIHT& 0.275  &0.315  &0.346  &0.242 & 0.264 &0.144  &14.4 &21.7  &  24.3  \\
		&GraSP& 0.244  & 0.337 & 0.386 & 0.296& 0.302 &0.221  & 42.2 &51.7  &70.4  \\
		&SP& 0.358 &0.402  & 0.461& 0.263&0.278  &0.266  &21.8 &32.3 &57.4 \\
		&LNA& 0.253  &  0.296 & 0.306  & 0.314 &0.267   & 0.278  & 01.6  & 01.9  &  02.9 \\
		\hline
	\end{tabular}
\end{table}

Regarding MSE,  GPNA achieves the smallest values in comparison with the other methods regardless of how the sparsity levels $s_1,s_2$ and correlation parameter $\theta$ change.  Once again, GPNA produces relatively small CCVs, which indicates that there is a high correlation between the two datasets. By contrast, since IIHT, GraSP, SP and LNA do not take the correlation into account, their generated CCVs are higher than these by GPNA and SCoRe.  It can be clearly seen that GPNA runs the fastest, such as 0.6 seconds consumed when $s_1=s_2=500,~\theta=0$ v.s. 23.7, 16.4, 87.6, 56.1 and 2.6 seconds  by the other five methods. 

\begin{table}[!htbp]
	\renewcommand{\arraystretch}{.88}\addtolength{\tabcolsep}{7pt}
	\caption{Comparison of the results for Example \ref{real2}.}\label{Table5}
	\centering
\begin{tabular}{l|ll|lccc|cc}
		\hline
		&  & & \multicolumn{3}{c}{Training} && \multicolumn{2}{c}{Testing}   \\\cline{4-9}
		&$s_1$ &$s_2$&MSE &CCV &TIME(s)&&  MSE& CCV  \\\cline{1-9}
		\multicolumn{9}{c}{ Mouse}\\\hline
		SCoRe& & &  0.423&0.183 &001.2&&0.386& 0.172 \\\hline
		GPNA&20&10&\textbf{0.196}&\textbf{0.121}&\textbf{000.1}&&\textbf{0.256}&\textbf{0.151}\\
		&20&20&\textbf{0.174}&\textbf{0.120} & \textbf{000.1}&&\textbf{0.223}&\textbf{0.136} \\ 
		&40&20&\textbf{0.141}& \textbf{0.103}&\textbf{000.1}&&\textbf{0.184}&\textbf{0.126} \\
		&40&40&\textbf{0.125}&\textbf{0.088}&\textbf{000.1}&&\textbf{0.167}& \textbf{0.103}\\\hline
		IIHT&20&10&0.325&0.228 &000.5&&0.315&0.233\\
		&20&20&0.323&0.197 &000.6&&0.301&0.198\\
		&40&20&0.319& 0.159&000.6&&0.305&0.227\\
		&40&40&0.318&0.162&000.7&&0.312&0.173\\\hline
		GraSP&20&10&0.324& 0.265&000.7&&0.336&0.302\\
		&20&20&0.312&0.263&000.7&&0.328&0.273\\
		&40&20&0.286&0.237&000.8&&0.313&0.262\\
		&40&40&0.294&0.258&000.9&&0.327&0.235\\\hline
		SP&20&10&0.337& 0.169&000.4&&0.344&0.183\\
		&20&20&0.335&0.158&000.4&&0.342&0.129\\
		&40&20&0.338&0.146&000.6&&0.353&0.187\\
		&40&40&0.334&0.138&000.9&&0.355&0.159\\\hline
	    LNA&20&10&0.266 &0.282  &000.3 &&0.378 &0.237 \\
		&20&20& 0.275&0.269 &000.3 &&0.346 & 0.245\\
		&40&20& 0.254&0.257 &000.4 && 0.361&0.235 \\
		&40&40& 0.253& 0.263&000.5 &&0.334 &0.239 \\
		\cline{1-9}
		\multicolumn{9}{c}{ DLBCL}\\\hline
		SCoRe& & & 0.533& 0.315&083.4 && 0.546 &0.307 \\\hline
		GPNA&50&50&\textbf{0.267} & \textbf{0.166}&\textbf{000.6}&&\textbf{0.313}&\textbf{0.213} \\
		&50&100&\textbf{0.243}& \textbf{0.167}&\textbf{000.6}&&\textbf{0.339} &\textbf{0.225}\\
		&100&100&\textbf{0.234}&\textbf{0.159} &\textbf{000.6}&&\textbf{0.324} &\textbf{0.212} \\
		&100&150&\textbf{0.233}&\textbf{0.158} &\textbf{000.7} &&\textbf{0.311} &\textbf{0.215} \\\hline
		IIHT&50&50&0.417&0.352 &039.2&&0.445&0.326\\
		&50&100&0.412&0.346 &042.7&&0.437&0.317\\
		&100&100&0.403&0.337 &045.6&&0.431&0.314\\
		&100&150&0.408&0.346 &046.7&&0.438&0.324\\\hline
		GraSP&50&50&0.456&0.434 &235.5&&0.441&0.362\\
		&50&100&0.458& 0.457&254.8&&0.451&0.353\\
		&100&100&0.432&0.442 &228.6&&0.439&0.351\\
		&100&150&0.422& 0.446&232.7&&0.440&0.363\\\hline
		SP&50&50&0.426&0.423 &013.5&&0.435&0.334\\
		&50&100&0.428& 0.437&014.8&&0.443&0.341\\
		&100&100&0.416&0.425 &015.6&&0.432&0.331\\
		&100&150&0.419& 0.432&017.4&&0.426&0.337\\\hline
		 LNA&50&50&0.398 & 0.451 &000.8 &&0.425 &0.366 \\
		&50&100&0.414 & 0.417&000.9 && 0.407&0.351 \\
		&100&100&0.386 &0.422 &001.1 && 0.396& 0.348\\
		&100&150& 0.381&0.436 &001.2 && 0.413&0.359 \\
		\hline		
	\end{tabular}
\end{table}

{\bf (f) Comparison for Example \ref{real2}.}  Finally, we report results of five methods for analysing two real datasets: Mouse data and DLBCL.   For mouse gene expression data, similar to \cite{Luoc}, we choose $p_{1} =163$ single nucleotide polymorphisms (SNPs corresponding to $X$)   and $p_{2} =215$ genes (corresponding to $Z$)   of $ n =294$ for  analysis. Again random splitting procedure is employed. At each split, 140 observations are randomly chosen as the testing data and the remaining 154 observations are the training data. The random splitting is repeated 100 times. We choose different sparsity and the average results are reported in Table \ref{Table5} and   display the better behaviour of GPNA since it runs much faster and obtains lower MSE (meaning better predictions), smaller CCV. For DLBCL, results present in Table \ref{Table5}, where the random splitting procedure being same as Example \ref{real1}. Similarly, GPNA obtains lower MSE (meaning better predictions), smaller CCV and runs the fastest, such as 0.6 seconds consumed when $s_1=s_2=50$ v.s. 83.4, 39.2, 235.5, 13.5 and 0.9 seconds  by the other five methods, which demonstrate better performance of GPNA.

\section{Conclusions and Future work} \label{sec:con}

The SCL model proposed in this paper not only fulfils the tasks of classification or regression for each dataset  but also explores the relationship between two datasets. The usage of the double sparsity constraints makes it more efficient for feature selections. To solve the SCL problem, the optimality conditions have been investigated, leading to a gradient projection strategy in the algorithm. To accelerate the convergence, we employed a Newton step when the iteration met some conditions. The final developed gradient projection Newton algorithm has proven to be global and at least quadratic convergent and possessed an excellent numerical performance.  We feel that the proposed method is capable of addressing some other general sparsity constrained optimization problems.

As pointed out by our referee,  it is an interesting topic to apply the developed techniques and method into dealing with the multi-model problems in particular for some practical applications, such as regional climate prediction. We leave this as future research.

\section*{Acknowledgments}

The authors would like to thank the Principal Editor and
the anonymous referee for their helpful suggestions.
\bibliographystyle{abbrv}
\bibliography{references}
\end{document}